\newcommand{\Z}{\mathbb Z}
\newcommand{\N}{\mathbb N}
\newcommand{\F}{\mathcal F}
\newcommand{\R}{\mathbb R}
\newcommand{\E}{\mathbb E}
\newcommand{\p}{\mathcal P}
\newcommand{\T}{\mathcal T}
\newtheorem{theorem}{Theorem}[section]
\newtheorem{lemma}[theorem]{Lemma}
\newtheorem{proposition}[theorem]{Proposition}
\newtheorem{definition}[theorem]{Definition}
\theoremstyle{remark}
\newtheorem{example}[theorem]{Example}
\numberwithin{equation}{section}
\begin{document}

\begin{frontmatter}

\title{Nonparametric statistical inference for the context tree of a stationary 
ergodic process\thanksref{t1}}

\thankstext{t1}{This article was produced as part of the activities of FAPESP  Research, Innovation and Dissemination Center for Neuromathematics, grant
2013/07699-0, S\~ao Paulo Research Foundation. It has also received financial support 
from the projects \emph{Stochastic systems: equilibrium and non-equilibrium, limits in scale and percolation}, grant CNPq 474233/2012-0, and \emph{Stochastic chains of long range}, grant FAPESP 2015/09094-3.}

\runtitle{Nonparametric statistical inference for context trees}

\author{\fnms{Sandro} \snm{Gallo}
\ead[label=e1]{sandro.gallo@ufscar.br}}
\address{Mathematics Departement\\Federal University of S\~ao Carlos, Brazil\\ \printead{e1}}

\author{\fnms{Florencia} \snm{Leonardi}\thanksref{t2}\corref{}\ead[label=e3]{florencia@usp.br}
\ead[label=e4,url]{http://www.ime.usp.br/$\sim$leonardi}}
\address{Institute of Mathematics and Statistics\\University of S\~ao Paulo, Brazil\\ \printead{e3}}

\thankstext{t2}{Partially supported by a CNPq-Brazil fellowship 304836/2012-5 and 
a L'Or\'eal Fellowship for Women in Science. } 

\runauthor{S. Gallo and F. Leonardi}

\begin{abstract}
We consider the problem of estimating the context tree of a  stationary ergodic process with finite alphabet without imposing additional conditions on the process.
As a starting point we introduce a  \emph{Hamming} metric in the space of irreducible context trees and we use the properties of the weak topology in the space of ergodic stationary processes to prove that if the Hamming metric is unbounded, there exist no consistent estimators for the context tree. Even  in the bounded case we show that there exist no two-sided confidence bounds. However we prove that one-sided inference is possible in this general setting and we construct a consistent estimator that is a lower bound for the context tree of the process with an explicit formula for the  coverage probability.
We develop an efficient algorithm to compute the lower bound and we apply the method to test a linguistic hypothesis about the context tree of codified written texts in European Portuguese. 
\end{abstract}

\begin{keyword}[class=MSC]
\kwd[Primary ]{62M09}
\kwd{62G15}
\kwd{62G20}
\kwd[; secondary ]{60G10}
\kwd{60J10}
\end{keyword}

\begin{keyword}
\kwd{variable length markov chain}
\kwd{context tree}
\kwd{confidence bounds}
\kwd{consistent estimation}
\kwd{nonparametric inference}
\end{keyword}


\end{frontmatter}

\section{Introduction}

In this work we address the issue of whether or not there exist consistent estimators (and confidence bounds) for the \emph{context tree} of a discrete time stationary ergodic process with finite alphabet. 
In words, the context tree of a stochastic process is a set of finite strings or left-infinite sequences that determines the portion of the past the process has to look at in order to decide the distribution of its next symbol.  For example, an i.i.d. process has the empty string as context tree since it has no dependence on the past. A $k$-steps Markov chain has a context tree containing at least one string of length $k$,  and a non-Markovian chain (sometimes coined infinite memory process) has a context tree having at least one left-infinite sequence. 

Finite context trees were introduced by \citet{rissanen1983} as an efficient tool for data compression. The corresponding processes were originally called \emph{Variable length Markov Chains} (VLMC) and its estimation was first addressed in \citet{buhlmann1999}. Recently, they have received increasing attention in the applied statistics literature, being used in a wide range of problems from different areas \citep[for instance]{bejerano2001a,dalevi-et-al2006,busch-et-al2009,galves-et-al2012}. Its success  in real word applications seems to stem from its parsimony (including memory only where data needs) and its capacity to capture structural dependencies in the data. The counterpart of the model, when compared to finite step Markov models for instance,  is that  estimation is a much complicated task. 
When he introduced the model, \cite{rissanen1983} also provided an algorithm for recovering the  context tree out of a given sample. Since then, a large part of the related statistical literature has focussed on consistent estimation of the context tree in the finite and infinite memory case, an incomplete list includes \cite{buhlmann1999,galves-leo-2008,collet-et-al2008,csiszar2006,garivier2011}. 

Most of the above cited works make some assumptions on the processes, such as lower bounding the transition probabilities or imposing mixing conditions, additionally to ergodicity. In the present paper, we precisely refer to our statistical inference problem as \emph{nonparametric} because we make no further assumptions concerning the distribution of the process, else than ergodicity.
 In this nonparametric setting,  \citet{csiszar2006} proved the consistency of the \emph{Bayesian Information Criterion} (BIC)  when the context trees are truncated to a given finite length (the truncation being necessary only for infinite context trees). Interestingly, nothing has been done  concerning confidence bounds as far as we know.

Given a sample of a stationary ergodic process, it is natural to wonder whether this process has a  finite or infinite context tree.  This cannot be consistently decided in this general class  \citep{bailey-phd,morvai-class}.  That is, there exists no two-valued function of the sample which, as the sample increases, stabilizes to the value ``yes'' for every process having a finite context tree and ``no'' for every process having an infinite context tree. Thus, when considering the discrete metric in the space of trees, the existence of a universal consistent estimator relies on assumptions that cannot be checked  empirically. 
This situation has its counterpart in nonparametric statistics for i.i.d observations. For instance, \cite{fraiman1999} observed that it is impossible to decide, out of a random sample, whether or not the underlying distribution has a finite number of modes. Assuming a priori that the number of modes is finite, they can be consistently estimated. \\

In the present work the space of irreducible context trees with finite alphabet is equipped with the Hamming distance. Using only topological arguments 
 we prove that if this metric in the space of trees is unbounded, there exists no  consistent estimator of the context tree in the class of stationary  ergodic processes. In the bounded metric case, we construct an estimator that is consistent and also a nonparametric lower bound with an explicit coverage probability, based on a result of \citet{garivier2011}. 
Finally, following \citet{donoho1988}, we also prove  that it is not possible to obtain nonparametric upper bounds even in the smaller class of processes having finite context trees. To our knowledge, this is the first work considering  the problem of construction of nonparametric confidence bounds for context trees.  

Notation, definitions and main results are given in the next section. In Section~\ref{computation} we show how to compute the lower confidence bound and we present a practical application, testing a linguistic hypothesis about the memory of stressed and non-stressed syllables in European Portuguese written texts. The proofs of the results are given in Section \ref{sec:proofs}. 
  
\section{Definitions and results}\label{sec:notation}

In this section  we present the main definitions and theoretical results  of this paper. 
We begin by describing the notion of irreducible tree and we introduce a \emph{Hamming} distance in the set of all irreducible trees over a finite alphabet.  Then we proceed by defining the context tree of a stationary  ergodic process and by establishing some topological  properties of the set of all stationary ergodic probability measures with respect to the weak topology. The last part of the section is dedicated to the statements of the main results of the paper.

\subsection{Metric tree space}

 Let $A$ be a finite set called alphabet.  For any $m\leq n$, we denote by $a_{m}^{n}$ the string $a_{m}\ldots a_n$ of symbols in $A$ with length $n-m+1$. This notation is also valid for $m=-\infty$ in which case we obtain a left-infinite sequence $a_{-\infty}^{n}$. If $m>n$ we let $a_m^n$ denote the empty string $\lambda$. 
 The length of a string $w$ will be denoted by $|w|$. For any $j\in \{0,1,\ldots\}$, we let $A^j$ denote the set of strings in $A$ having length $j$, in particular $A^0=\{\lambda\}$.  We also let $A^\star=\cup_{j\ge0}A^j$ denote the set of all finite strings on $A$ and we denote by $A^{\infty}$ the set of all left-infinite sequences $a_{-\infty}^{n}$ with symbols in $A$.
 
  We will need to concatenate strings; for instance, if $v\in A^i$ and $w\in A^j$ are strings of length $i$ and $j$ respectively, then $vw$ denotes the string of length $i+j$ obtained by putting the symbols in $w$ after the ones in $v$.
  We also extend concatenation to the case where $v\in A^\infty$ is an infinite string on the left. We say that $w$ is a \emph{suffix} of the sequence $s$ if there exists a sequence $v$ such that $s=vw$. When $|v|\geq 1$ we say that $w$ is a proper suffix of $s$.

A \emph{tree} $\tau$ is any set of strings or perhaps of left-infinite sequences, called \emph{leaves}, such that no $w\in\tau$ is a proper suffix of any other $s\in\tau$. This property enables us to represent the set $\tau$ as a graphical rooted tree by identifying the elements in $\tau$ with \emph{paths} from the terminal nodes of the tree to the root. As an example of finite tree, consider the set $\tau_1 = \{00,010,110,1\}$ over the alphabet $A=\{0,1\}$.  On the other hand, an example of an infinite tree over $A$ is given by 
 $\tau_2=\{10_1^i\colon i=0,1,\dotsc\}\cup \{0^\infty\}$, which has a unique infinite element, the left-infinite sequence $0^\infty$. The graphical representation of these trees can be found in Fig.~\ref{exampletrees}.
Special cases of trees are given by  the entire set
$A^\infty$ of left-infinite sequences, denoted in this paper by $\tau^\infty$, and the tree consisting of the unique empty string $\lambda$, denoted by $\tau^\text{root}$.

\begin{figure}
\center\includegraphics[scale=0.7]{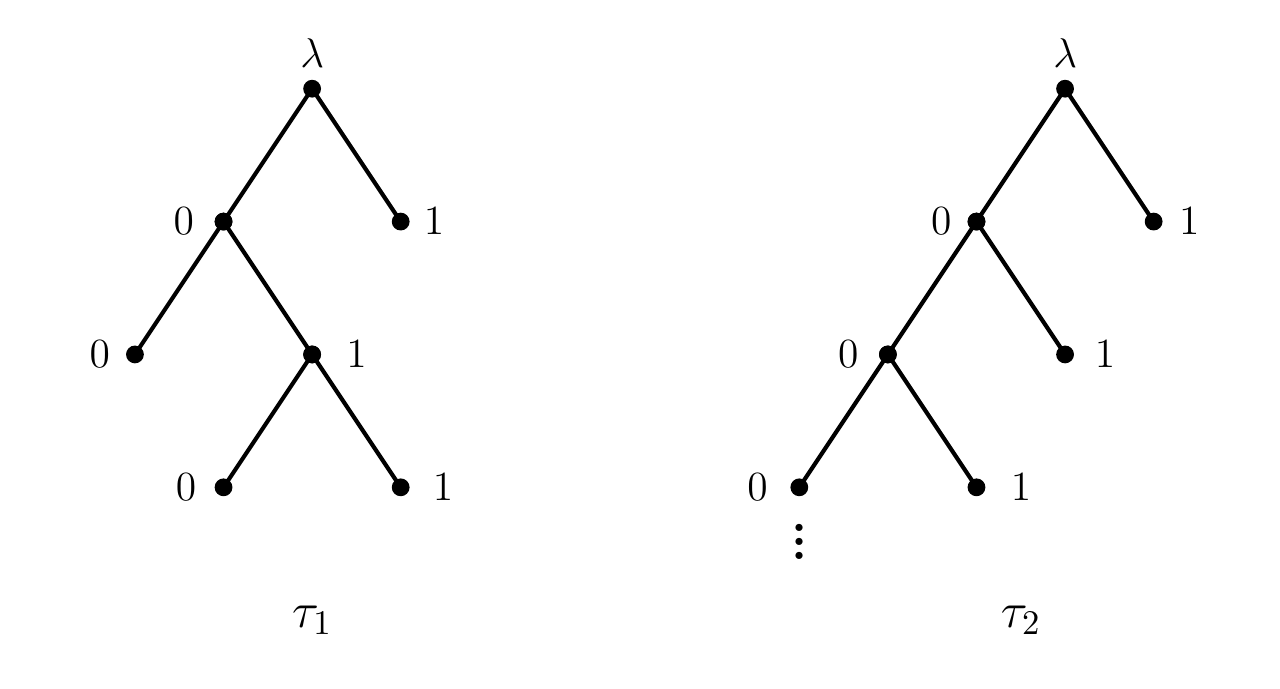}
\caption{Graphical representation of the trees $\tau_1 = \{00,010,110,1\}$ and 
$\tau_2=\{10_1^i\colon i=0,1,\dotsc\}\cup\{0^\infty\}$. In both cases, the contexts corresponds to 
the sequences obtained by concatenating the symbols from the leaves to the root of the trees. In the case of $\tau_2$ we only show  the strings of length at most 3.}
\label{exampletrees}
\end{figure}

We say that the tree $\tau$ is \emph{irreducible} if no $w\in\tau$ can be replaced by a proper suffix without violating the tree property. Both trees in Fig.~\ref{exampletrees} are irreducible, as well as  $\tau^\infty$ and $\tau^\text{root}$. An example of a non-irreducible tree is $\tau_3 = \{000,010,110,1\}$, because substituting 000 by 00 leads to $\tau_1$ that satisfies the tree property.

We will call  a \emph{node} of $\tau$ any finite string that is a  suffix of some $s\in\tau$. Sometimes it will be convenient to identify $\tau$ with the set of its nodes $\bar\tau\subset A^\star$.  In fact it is easy to verify that $\tau$ uniquely determines $\bar{\tau}$ and \emph{vice versa}.  In the case of $\tau_1$ given before, the set  $\bar\tau_1$ is the set of all strings represented in Fig.~\ref{exampletrees}, that is $\bar\tau_1=\{010,110,00,10,0,1,\emptyset\}$.  
In the case of $\tau_2$ we have $\bar\tau_2=\tau_2\cup\{0_1^i\colon i=0,1,\dotsc\}$.

Let $\T$ denote the set of all irreducible trees on $A$, with the following partial order
\[
\tau\;\prec (\preceq) \;\tau'\quad \text{  if and only if  }\quad \bar\tau\;\subsetneq(\subseteq)\; \bar\tau'.
\]
Given a tree $\tau\in \T$ and a constant $k\in\N$, we denote by 
 $\tau|_k$
the truncated tree at level $k$, defined by the set of its nodes 
\[
\bar\tau|_k =\{ v\in\bar{\tau}\colon |v| \leq k\}\,.
\]
Finally, $\T$ is equipped with   the \emph{Hamming} distance defined by 
\begin{equation}\label{hamming}
d_\phi(\tau,\tau') \;=\; \sum_{v\in A^\star} \phi(v)\;| \mathbf{1}_{\{v\in \bar{\tau}\}} - \mathbf{1}_{\{v\in \bar{\tau}'\}} |\;,
\end{equation}
where $\phi\colon A^\star\to \R^+$. In the summable case $\sum_{v\in A^\star}\phi(v)<+\infty$ we have that $(\T,d_\phi)$ is a bounded metric space.

\subsection{Context tree of a stationary ergodic process}

Let $\{X_i\colon i\in\Z\}$ be a stationary and ergodic process assuming values in the alphabet $A$. We denote by $P(a_n^m)$ the stationary probability of the string $a_n^m$, that is 
\[
P(a_n^m)\;=\; \text{Prob}\bigl(X_n^m= a_n^m\bigr)\,.
\]
If $s\in A^\star$ is such that $P(s)>0$ we write
\[
P(a|s) \;=\; \text{Prob}\bigl(X_0=a\,|\, X_{-|s|}^{-1}=s\bigr),
\]
with the convention that if $s=\emptyset$ then $P(a|s)= \text{Prob}\bigl(X_0=a\bigr)$.

A process as above is said to have law, or measure, $P$.

\begin{definition}\label{defcontext}
We say that the string $s\in A^\star$ is a \emph{context} for a process with measure $P$ if it satisfies
\begin{enumerate}
\item $P(s)>0$ or $s=\emptyset$\,.
\item For all $a\in A$ and all $w\in A^\star$ such that $s$ is suffix of $v$ 
\begin{equation}\label{eq:abuse}
\text{Prob}\bigl(X_0=a\,|\, X_{-|v|}^{-1}=v\bigr)\;=\; P(a|s)\,.
\end{equation}
\item No proper suffix of $s$ satisfies 2. 
\end{enumerate}
An \emph{infinite context} is a left-infinite sequence $x_{-\infty}^{-1}$ such that its finite suffixes $x_{-n}^{-1}, n=1,2,\dotsc$ have positive probability but none of them is a context.  
\end{definition} 

By this definition, the set of contexts of a process with measure $P$ is an irreducible tree, it will be denoted by $\tau_P$. 

\begin{example}
Consider the stationary   Markov chain of order 3 over the alphabet $A=\{0,1\}$ defined  by the transition probabilities
\begin{center}
\begin{tabular}{|r|c|c|}
\hline
$w$ & $P(0|w)$ & $P(1|w)$ \\\hline
$ab1$ & 0.2 & 0.8 \\\hline
$a00$ & 0.5 & 0.5  \\\hline
$010$ & 0.3 & 0.7 \\\hline
$110$ & 0.7 & 0.3 \\\hline 
\end{tabular}
\end{center}
where $a,b \in A$ are arbitrary. This is an example of what is called a \emph{Variable Length Markov Chain} (VLMC). 
By Definition~\ref{defcontext}, the only contexts of this process are the strings 1, 00, 010 and 110. The context tree
$\tau_P$ is the tree $\tau_1$ represented in Fig.~\ref{exampletrees}.

\end{example}

\begin{example}
Suppose that the process $\{X_i\colon i\in\Z\}$ takes values in $\{0,1\}$, and in order to decide the probability distribution of the next symbol based on the past realization, we only need to know the distance to the last occurrence of a $1$.
 Then, for any $k\ge0$, any $i\ge1$ and any $v,w\in A^i$
\[
P(1|v10^k)=P(1|w10^k).
\]
According to Definition \ref{defcontext}, the strings $10^k$, $k\ge0$, as well as the semi-infinite sequence $0^\infty$ are context of this process. Therefore, the context tree $\tau_P$ is $\tau_2$ shown in Fig.~\ref{exampletrees}. 
\end{example}

\subsection{The weak topology in the space of stationary ergodic processes}

Let $\Sigma$ be the  $\sigma$-algebra on $\Omega=A^\Z$ obtained as the product of the discrete $\sigma$-algebra on $A$. 
Let $\p$ denote the set of all stationary ergodic probability measures over $(\Omega,\Sigma)$.

Define the following distance in $\p$
\[
D(P,Q) = \sum_{k\in\N} 2^{-k} |P - Q |_k\,,
\]
where 
\[
|P - Q |_k = \sum_{a_1^k\in A^k} |P(a_1^k) - Q(a_1^k)|
\]
is the $k$-th order variational distance. This distance is known in the literature as the \emph{weak distance}, and the topology induced by it is known as the \emph{weak topology} \cite[Section I.9]{shields1996}.  

We now state a basic lemma about the topological properties of  the space $\p$ with respect to the weak topology.

\begin{lemma}\label{lema_baire}
The space $(\p,D)$ is a Baire space. 
\end{lemma}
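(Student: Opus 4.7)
The plan is to prove that $(\p,D)$ is completely metrizable, from which the Baire property follows immediately by the classical Baire Category Theorem. Since $A$ is finite, $\Omega=A^{\Z}$ is a compact metric space, and the space $\mathcal{M}(\Omega)$ of all Borel probability measures on $\Omega$ endowed with the weak topology is itself compact and metrizable; the formula defining $D$ extends verbatim to $\mathcal{M}(\Omega)$ and gives a compatible metric there (see \cite[Section I.9]{shields1996}). Thus $(\mathcal{M}(\Omega),D)$ is a compact, hence complete, metric space, and it suffices to locate $\p$ as a $G_\delta$ subset of it.

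First I would verify that the subset $\mathcal{M}_s\subset\mathcal{M}(\Omega)$ of shift-invariant probability measures is closed: shift invariance is equivalent to the countable collection of equalities $Q(C)=Q(T^{-1}C)$ as $C$ ranges over cylinders, and each such equality defines a closed subset of $\mathcal{M}(\Omega)$ for the weak topology. Consequently $\mathcal{M}_s$ is compact, and obviously convex.

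Next I would use the ergodic decomposition to identify $\p$ with the set of extreme points of $\mathcal{M}_s$, and then show that this set is $G_\delta$ in $\mathcal{M}_s$. The standard way is to pass to the complement, writing the set of non-extreme measures as
\[
\bigcup_{n\ge 1}\bigl\{Q\in\mathcal{M}_s:\exists\,Q_1,Q_2\in\mathcal{M}_s,\ D(Q_1,Q_2)\ge 1/n,\ Q=\tfrac{1}{2}(Q_1+Q_2)\bigr\},
\]
and observing that each term in the union is closed by compactness of $\mathcal{M}_s$; hence $\p$ is $F_\sigma^c=G_\delta$ in $\mathcal{M}_s$, and therefore in $\mathcal{M}(\Omega)$. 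Alexandrov's theorem on $G_\delta$ subsets of complete metric spaces then guarantees that $(\p,D)$ is completely metrizable in an equivalent metric, and the Baire Category Theorem delivers the conclusion.

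The step that requires the most care is the third one, where the ergodic measures must be identified with the extreme points of $\mathcal{M}_s$ and then exhibited as a $G_\delta$: both ingredients are classical, but they combine a measure-theoretic fact (ergodic decomposition) with a convex-analytic one (Choquet-type characterization of extreme points in a metrizable compact convex set), so some attention is needed to keep the argument self-contained and to avoid introducing unnecessary structure beyond what the weak topology already provides.
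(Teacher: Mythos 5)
Your overall strategy coincides with the paper's: exhibit $\p$ as a $G_\delta$ subset of a compact metrizable space of measures and conclude the Baire property from complete metrizability (Alexandrov) plus the Baire category theorem. The difference is in how the $G_\delta$ fact is obtained. The paper simply cites two facts: compactness of the set of stationary measures in the weak topology (Shields) and the theorem of Parthasarathy that the ergodic measures form a $G_\delta$ subset of it, and then concludes. You instead reprove the $G_\delta$ property from scratch, identifying $\p$ with the extreme points of the compact convex set $\mathcal{M}_s$ of shift-invariant measures and writing the non-extreme points as $\bigcup_n\{Q=\tfrac12(Q_1+Q_2):Q_i\in\mathcal{M}_s,\,D(Q_1,Q_2)\ge 1/n\}$, each term closed by compactness. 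This argument is correct and standard; what it buys is self-containedness (you do not need the external reference for the $G_\delta$ step), at the price of invoking the identification of ergodic measures with extreme points, which itself deserves at least a one-line argument (if $P$ is ergodic and $P=\tfrac12(Q_1+Q_2)$ with $Q_i$ invariant, then $Q_i\ll P$ forces $Q_i=P$; conversely a non-ergodic $P$ splits along a non-trivial invariant set), rather than an appeal to the full ergodic decomposition.

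One claim in your first step is inaccurate as stated: the formula for $D$ does \emph{not} define a metric compatible with the weak topology on all of $\mathcal{M}(\Omega)$, because it only involves the probabilities $P(a_1^k)$ of cylinders anchored at coordinates $1,\dots,k$. Two distinct non-stationary measures on $A^\Z$ can agree on all such cylinders while differing on negative coordinates, so on $\mathcal{M}(\Omega)$ the formula is only a pseudometric and does not induce the weak topology. The fix is harmless: either work from the start inside the closed (hence compact) set $\mathcal{M}_s$ of stationary measures, on which stationarity recovers all cylinder probabilities and $D$ is a genuine compatible metric (this is exactly the setting of the reference the paper cites), or metrize $\mathcal{M}(\Omega)$ by any standard metric for the weak topology and note that it restricts to the topology of $D$ on $\mathcal{M}_s$. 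With this adjustment the rest of your argument goes through unchanged, since Baire-ness is a topological property and your Alexandrov step only requires an equivalent complete metric on $\p$.
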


\subsection{Consistent estimation and confidence bounds}

As mentioned in the Introduction, in this paper we are interested in 
the estimation of properties of the context tree $\tau_P$ from samples $X_1,\ldots,X_n$ of size $n$ of the corresponding stationary and ergodic process $P$.
Up to now this problem has  been reduced to the consistent  identification of the set of contexts (in the finite case) or of a truncated version of the context tree (in the infinite case). The latter corresponds to a special case of our distance $d_\phi$; for instance when the interest is in estimating contexts of length at  most $k$ we can consider $\phi(v)=0$ for all $|v|>k$.  In the sequel we define  the notion of consistency of a sequence of estimators in a general setting.\\

  Let $F:\p\rightarrow \F$ be a functional with values in some metric space $(\F,d)$.
 
 \begin{definition}
 We  say that  $F$ is consistently estimable on $\p$ (in probability) if there exists a sequence 
 $\{F_n\}_{n\in\N}$ of statistics, with $F_n\colon A^n\to\F$, 
such that for all $P\in\p$
\[
d\left(F_n(X_1,\dotsc,X_n),F(P)\right)\;\stackrel{P}{\longrightarrow}\;0\,.
\]
\end{definition}
In this case we say that $\{F_n\}_{n\in\N}$ is a consistent estimator for $F$ on $\p$. We say 
that $F$ is \emph{strongly} consistent on $\p$ if the convergence takes place almost surely 
with respect to the probability measure $P$, and in this case we say that $\{F_n\}_{n\in\N}$ is 
a strongly consistent estimator for $F$ on $\p$.

The following result establishes a necessary condition for the existence of consistent 
estimators of a  bounded real  functional defined on $\p$.

\begin{proposition}\label{non_consistency}
Assume $F\colon \p\to \R$ is bounded (that is there exists $R\in\R$ such that 
$|F(P)| \leq R$ for all $P\in\p$).  
If  $F$ is consistently estimable on $\p$ then $F$ must be continuous  on  a dense subset of $\p$.
\end{proposition}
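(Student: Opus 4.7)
The plan is to show that the bounded consistent estimand $F$ is necessarily a Baire class 1 function on $(\p,D)$, and then to invoke the classical Baire theorem stating that on a Baire space the set of continuity points of such a function is a dense $G_\delta$.

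First, since $|F(P)|\le R$ for all $P\in\p$, we may assume without loss of generality that each estimator $F_n\colon A^n\to\R$ also takes values in $[-R,R]$: replacing $F_n$ by its truncation $\tilde F_n=(F_n\wedge R)\vee(-R)$ preserves consistency in probability, because $|\tilde F_n-F(P)|\le|F_n-F(P)|$ whenever $|F(P)|\le R$. Second, define for each $n\in\N$
\[
g_n(P)\;=\;\E_P\bigl[\tilde F_n(X_1,\dots,X_n)\bigr]\;=\;\sum_{a_1^n\in A^n}\tilde F_n(a_1^n)\,P(a_1^n)\,.
\]
Each term $P\mapsto P(a_1^n)$ is continuous on $(\p,D)$ since $|P(a_1^n)-Q(a_1^n)|\le|P-Q|_n\le 2^n D(P,Q)$, and the sum is finite because $A$ is finite; hence $g_n$ is continuous on $(\p,D)$.

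Third, I claim $g_n(P)\to F(P)$ pointwise on $\p$. Indeed, by consistency $\tilde F_n(X_1,\dots,X_n)\to F(P)$ in $P$-probability, and the sequence is uniformly bounded by $R$, so the bounded convergence theorem (in its standard formulation for convergence in probability) gives $\E_P[\tilde F_n]\to F(P)$, i.e.\ $g_n(P)\to F(P)$. Therefore $F$ is the pointwise limit of a sequence of continuous real-valued functions on $(\p,D)$, so it is of Baire class $1$.

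Finally, by Lemma~\ref{lema_baire} the space $(\p,D)$ is Baire, and a classical theorem of Baire asserts that on a Baire space, every real-valued function of Baire class~$1$ has its set of continuity points equal to a dense $G_\delta$ subset. In particular this set is dense in $\p$, which is exactly the conclusion to be proved. The substantive ingredients are Lemma~\ref{lema_baire} and the Baire–Osgood dichotomy; the only mild subtlety is the reduction to uniformly bounded estimators, so that pointwise convergence in probability can be upgraded to convergence of expectations, thereby producing the required continuous approximating sequence $\{g_n\}$.
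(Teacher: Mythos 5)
Your proposal is correct and follows essentially the same route as the paper's proof: truncate the estimator to $[-R,R]$, pass to the expectations $\E_P[\tilde F_n]$, which are continuous on $(\p,D)$ via the bound $|P-Q|_n\le 2^n D(P,Q)$, use bounded convergence to get pointwise convergence to $F$, and conclude by the Baire category argument on the Baire space $(\p,D)$ from Lemma~\ref{lema_baire}. The only cosmetic difference is that you verify consistency of the truncated estimator directly by the non-expansiveness of truncation, where the paper cites Lemma~1.1 of \citet{fraiman1999}.
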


In this paper we are  concerned with the functional $T\colon\p\to\T$ that assigns to any 
measure $P\in\p$ its associated context tree $\tau_P\in\T$.  
The first question we address here  is if it is possible to decide, out from a finite sample, if the 
sum  of the function $\phi$ over the nodes of the context tree is finite or not. 

\begin{theorem}\label{discontinuity}
If $\sum_{v\in A^\star} \phi(v) = +\infty$ then the functional 
\[
L(P)={\bf 1}\{\text{\small $\sum_{v\in \bar\tau_P} \phi(v)< +\infty$}\}  
\]
is not consistently estimable on $\p$. 
\end{theorem}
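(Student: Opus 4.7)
The plan is to apply Proposition~\ref{non_consistency} to $L$, which is bounded since it is $\{0,1\}$-valued. Because $L$ takes only two values, its set of continuity points equals $\mathrm{int}(L^{-1}(0))\cup\mathrm{int}(L^{-1}(1))$. I would derive a contradiction by showing that both $L^{-1}(0)$ and $L^{-1}(1)$ are dense in $(\p,D)$: since they partition $\p$, each would then have empty interior, so $L$ would have no continuity points at all and \emph{a fortiori} would not be continuous on any dense subset of $\p$.

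Density of $L^{-1}(1)$ is the easy direction. Irreducible, strictly positive Markov chains of arbitrary finite order are dense in $(\p,D)$: the order-$k$ Markov projection $P^{(k)}$ of any $P\in\p$ converges weakly to $P$ as $k\to\infty$ by a standard reverse-martingale argument, and strict positivity of transitions can be arranged by an additional arbitrarily small uniform perturbation. Every such chain has a finite context tree, so the $\phi$-sum over its nodes is finite and $L$ equals $1$ on it.

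Density of $L^{-1}(0)$ is the heart of the argument. By the previous paragraph it suffices to approximate, in $D$, every strictly positive Markov chain $Q$ of some finite order $k$ by an ergodic measure $\tilde Q$ whose context tree equals $\tau^\infty$, since then $\bar\tau_{\tilde Q}=A^\star$ and the hypothesis $\sum_{v\in A^\star}\phi(v)=+\infty$ forces $L(\tilde Q)=0$. I would build $\tilde Q$ as the stationary law of a g-function of the form
\[
g(a\mid x_{-\infty}^{-1}) \;=\; Q(a\mid x_{-k}^{-1}) \;+\; \varepsilon\sum_{j>k}\alpha_j\,h_a(x_{-j}),
\]
with positive summable weights $\alpha_j$, bounded functions $h_a\colon A\to\R$ satisfying $\sum_{a\in A}h_a(b)\equiv 0$ for every $b\in A$ (to preserve total mass) and each $h_a$ non-constant on $A$ (to create genuine dependence on every past coordinate), and $\varepsilon>0$ small enough that $g$ stays bounded away from $0$ and $1$. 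Standard existence results for g-measures with summable variation provide a unique stationary — and therefore ergodic — $\tilde Q$ compatible with $g$. By construction, for any finite $v\in A^\star$ one may choose $m=\max(k,|v|)+1$ and see that $g(a\mid x_{-\infty}^{-1})$ still depends non-trivially on $x_{-m}$ through the term $\alpha_m h_a(x_{-m})$, while $x_{-m}$ lies strictly before the occurrence of $v$; thus $v$ fails condition~(2) of Definition~\ref{defcontext}, no finite string is a context, and $\tau_{\tilde Q}=\tau^\infty$. Standard continuity estimates for g-measures also yield $D(Q,\tilde Q)=O(\varepsilon)$, giving the required approximation.

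Combining the two density claims contradicts Proposition~\ref{non_consistency} and finishes the proof. The main obstacle is the infinite-memory construction of $\tilde Q$: one must simultaneously guarantee existence and ergodicity of the stationary law of the perturbed g-function, closeness to $Q$ in the weak metric, and — most delicately — that every finite string fails Definition~\ref{defcontext}(2), so that $\tau_{\tilde Q}$ is genuinely the full infinite tree $\tau^\infty$; the remainder of the argument is topological bookkeeping built on Proposition~\ref{non_consistency}.
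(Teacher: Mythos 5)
Your overall route is the same as the paper's: invoke Proposition~\ref{non_consistency} and contradict it by showing that both level sets of $L$ are dense in $(\p,D)$, i.e.\ that $L$ is discontinuous everywhere. The paper packages exactly these two density statements as Lemma~\ref{discontinuous2} (measures with finite context tree, hence $L=1$, are dense: canonical $k$-step Markov approximation plus a small uniform perturbation to force ergodicity, which is your first paragraph) and Lemma~\ref{discontinuous1} (measures whose context tree is a prescribed tree containing $\tau_P$, in particular $\tau^\infty$, are dense). The only divergence is the device in the second step: the paper takes a convex combination $(1-1/i)P^{[k]}+\tfrac1i\tilde P$ with a fixed auxiliary process $\tilde P$ having non-nullness, everywhere positive continuity rate and summable variation, while you add the additive perturbation $\varepsilon\sum_{j>k}\alpha_j h_a(x_{-j})$; both rest on the same existence/uniqueness/ergodicity theory for kernels with summable variation and conclude by a diagonal (or $\varepsilon\to0$) argument. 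So this is essentially the paper's proof, not a genuinely different one.

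One step, however, is not justified as written: from ``$g$ depends non-trivially on $x_{-m}$ through $\alpha_m h_a(x_{-m})$'' you conclude that every finite $v$ fails Definition~\ref{defcontext}(2). But condition (2) concerns conditional probabilities given \emph{finite} pasts, and for $m>k$
\[
\tilde Q(a\mid x_{-m}^{-1})\;=\;Q(a\mid x_{-k}^{-1})+\varepsilon\sum_{j=k+1}^{m}\alpha_j h_a(x_{-j})+\varepsilon\sum_{j>m}\alpha_j\,\E_{\tilde Q}\bigl[h_a(X_{-j})\mid X_{-m}^{-1}=x_{-m}^{-1}\bigr]\,,
\]
so the explicit dependence on $x_{-m}$ could in principle be cancelled by the averaged tail; mere summability of the $\alpha_j$ does not exclude this (nor does it give summable variation, for which you need $\sum_j j\alpha_j<\infty$). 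The fix is cheap: choose rapidly decaying weights, e.g.\ geometric with small ratio, so that $\mathrm{osc}(h_a)\,\alpha_m>2\|h_a\|_\infty\sum_{j>m}\alpha_j$ for every $m>k$ and every $a$. Then, for $m>\max(k,|v|)$, two finite pasts extending $v$ that differ only in coordinate $-m$ (both of positive $\tilde Q$-measure by non-nullness of $g$) yield conditionals differing by at least $\varepsilon\bigl(\alpha_m\,\mathrm{osc}(h_a)-2\|h_a\|_\infty\sum_{j>m}\alpha_j\bigr)>0$, so they cannot both equal $\tilde Q(a\mid v)$ and $v$ is not a context. With this quantitative choice your construction does give $\tau_{\tilde Q}=\tau^\infty$, and the rest of the argument goes through.
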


This result states, in particular, that the functional that attributes the value $1$ if the measure is Markovian, and $0$ otherwise, is not consistently estimable when $\phi$ is not summable. This is a known result; see \citet{morvai-class} and references therein. However, our proof is completely different from theirs and it is mainly based on topological properties of $\p$.  

Our main result about consistent estimation for the context tree on $\p$ is given in the 
following theorem. 

\begin{theorem}\label{teo-main}
$T$ is  consistently estimable on $\p$ if and only if $\sum_{v\in A^\star} \phi(v)$ is 
finite.  
\end{theorem}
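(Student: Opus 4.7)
The statement is an equivalence, so I would prove both directions separately.

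For the \emph{only if} direction I would argue by contradiction. Assume $\sum_{v\in A^\star}\phi(v)=+\infty$ and that there exist statistics $\{T_n\}$ with $d_\phi(T_n,\tau_P)\to 0$ in $P$-probability for every $P\in\p$. From $\{T_n\}$ I would build a consistent estimator of the functional $L$ of Theorem~\ref{discontinuity}, yielding the contradiction. The natural candidate is
\[
L_n \;:=\; \mathbf{1}\Bigl\{\sum_{v\in\overline{T_n}}\phi(v)<+\infty\Bigr\}.
\]
The key observation is that $d_\phi(T_n,\tau_P)=\sum_{v\in\overline{T_n}\triangle\bar\tau_P}\phi(v)$, so the event $\{d_\phi(T_n,\tau_P)<+\infty\}$ forces the sums $\sum_{v\in\overline{T_n}}\phi(v)$ and $\sum_{v\in\bar\tau_P}\phi(v)$ to be simultaneously finite or simultaneously infinite, and hence $L_n=L(P)$ on this event. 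Since $d_\phi(T_n,\tau_P)\to 0$ in probability, this event has $P$-probability tending to one, so $L_n\to L(P)$ in probability, contradicting Theorem~\ref{discontinuity}.

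For the \emph{if} direction I would exploit the summability to reduce to a finite decision problem. Since $\sum_v\phi(v)<+\infty$, for every $\delta>0$ the level set $F_\delta:=\{v\in A^\star:\phi(v)\ge\delta\}$ is finite, and the tail $\sum_{v\notin F_\delta}\phi(v)$ tends to zero as $\delta\to 0$. The plan is to combine a nonparametric estimator of the truncated context tree (for instance the BIC estimator of \citet{csiszar2006}, which is consistent on all of $\p$ for every fixed truncation level $k$) with a truncation level $k_n\to\infty$, and to use the decomposition
\[
d_\phi(\hat T_n,\tau_P) \;\le\; \sum_{v\in F_\delta\cap(\overline{\hat T_n}\triangle\bar\tau_P)}\phi(v) \;+\; \sum_{v\notin F_\delta}\phi(v).
\]
Given $\epsilon>0$, first choose $\delta>0$ so that the tail is below $\epsilon/2$; since $F_\delta$ is finite, as soon as $k_n$ exceeds the maximal length of a node in $F_\delta$, a union bound over $F_\delta$ together with the consistency of the truncated estimator shows that the first sum vanishes with probability tending to one, giving $d_\phi(\hat T_n,\tau_P)\to 0$ in $P$-probability.

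The main obstacle I foresee is the uniform handling of the truncation level $k_n$ in the fully nonparametric class, since in the absence of mixing conditions or of a minoration of transition probabilities the rate at which the truncated estimator identifies $\tau_P|_k$ depends on $P$ in an uncontrollable way. This is presumably why the paper constructs the explicit lower-bound estimator announced in the abstract: any $\hat T_n$ satisfying $\overline{\hat T_n}\subseteq\bar\tau_P$ introduces no spurious nodes, so the argument above reduces to showing that each element of $F_\delta\cap\bar\tau_P$ is eventually included, which requires only finitely many single-node consistency statements and fits naturally in the nonparametric framework.
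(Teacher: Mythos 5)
Your \emph{only if} argument is essentially the paper's own: you define $L_n=\mathbf{1}\{\sum_{v\in\bar T_n}\phi(v)<+\infty\}$ and observe that a finite $d_\phi$-distance forces $\sum_{v\in\bar T_n}\phi(v)$ and $\sum_{v\in\bar\tau_P}\phi(v)$ to be simultaneously finite or infinite, so consistency of $\{T_n\}$ would make $L$ consistently estimable, contradicting Theorem~\ref{discontinuity}. That direction is correct and matches the paper.

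The \emph{if} direction is where your proposal has a genuine gap: you never actually exhibit a sequence of statistics together with a complete proof of consistency. The paper proves this half constructively, via the estimator $T_n^c$ of \eqref{eq:estimator} and Theorem~\ref{lower_bound_T}: Lemma~\ref{prop:borne} (a martingale deviation bound in the spirit of \citet{garivier2011}) shows that with probability at least $1-(|A|-1)n^{-(c/(|A|-1)-2)}$ no spurious node is created, i.e.\ $T_n^c\preceq\tau_P$, and a separate ergodicity argument shows that every node of $\bar\tau_P|_k$ is eventually forced into $\bar T_n^c$; the summable tail $\sum_{v\in\bar\tau_P,\,|v|>k}\phi(v)<\epsilon$ then closes the bound. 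Your final paragraph gestures at exactly this skeleton (``no spurious nodes plus finitely many single-node inclusions''), but you neither construct such a lower-bound estimator nor prove either of the two ingredients, so as written it is a plan, not a proof. Your alternative route through the BIC estimator of \citet{csiszar2006} could be made rigorous, but not as you set it up: their consistency result is not only ``for every fixed truncation level $k$''; it allows the candidate depth $D(n)$ to grow as $o(\log n)$ and identifies $\tau_P|_{D(n)}$ eventually almost surely, in which case the symmetric difference eventually contains only nodes of $\tau_P$ deeper than $D(n)$ and summability of $\phi$ alone finishes the argument, with no need for the $F_\delta$ union bound. Your stated obstacle --- that the identification rate depends on $P$ uncontrollably over the nonparametric class --- is a misdiagnosis: consistency is a pointwise-in-$P$ statement, so no uniformity over $\p$ is required; the only real question is whether the truncation in the cited theorem may grow with $n$, and it may. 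In sum, neither of your two routes for the ``if'' half is carried to completion, whereas in the paper this half is precisely the main constructive content (Theorem~\ref{lower_bound_T}).
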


The \emph{only if} part of this theorem is a direct consequence of Theorem \ref{discontinuity}. 
The  \emph{if} part is proved constructively later, because the estimator $\{T_n^c\}_{n\in\N}$ 
defined  by \eqref{eq:estimator} below will be proved to be consistent when $\phi$ is 
summable. 

As mentioned before, the present work is also concerned with the obtention of  confidence bounds for the context tree of a stationary and ergodic  
process. We use the following  general definition of upper and lower confidence bounds, taken from 
\cite{donoho1988}. Suppose $\F$ is equipped with a partial order $<$  with supremum and infimum. 

\begin{definition}
Given $n\ge1$, a statistic $U_n\colon A^n\to\F$ is called a  non-trivial upper confidence bound for $F$ on $\p$ with coverage probability at least $1-\alpha$ if
\[
\sup_{P\in\p}P(U_n  < \sup_{P'\in\p} F(P'))= 1
\]
and
\[
\inf_{P\in\p} P(F(P) \leq U_n)\;\geq\;1-\alpha.
\]
Analogously we say that $L_n$ is a non-trivial lower confidence bound for $F$ on
$\p$   with coverage probability at least $1-\alpha$ if $-L_n$ is a non-trivial upper 
confidence bound for $-F$ on $\p$  with coverage probability at least $1-\alpha$.
\end{definition}

Our first theorem concerning confidence bounds is a negative result stating that the 
functional $T$ does not admit a non-trivial upper confidence bound  neither on $\p$ nor in the class of stationary ergodic measures with finite context tree.\\

\begin{theorem}\label{teo_upper_bound}
If $U_n$ is an upper bound that satisfies 
$\sup_{P\in\p} \,P(U_n \prec \tau^\infty) = 1$ 
then the coverage 
probability $\inf_{P\in\p}\,P(\tau_P \preceq U_n)\;=\;0$. This is also 
satisfied even in the smaller class $\p_f\subset\p$ of stationary ergodic measures having 
finite context tree.
\end{theorem}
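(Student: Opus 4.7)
The plan is to follow the argument of Donoho (1988) adapted to context trees. Fix $\epsilon > 0$. The non-triviality hypothesis lets me pick $Q \in \p$ with $Q(U_n \prec \tau^\infty) > 1 - \epsilon/2$. Because $A^n$ is finite, $U_n$ has a finite image $\{\tau^{(1)}, \dotsc, \tau^{(M)}\} \subset \T$; set $I = \{i : \tau^{(i)} \prec \tau^\infty\}$. Every $\tau \in \T$ satisfies $\tau \preceq \tau^\infty$ (its nodes are finite strings while $\bar{\tau^\infty} = A^\star$), so the complement of $I$ contains only the single value $\tau^\infty$. For each $i \in I$ one has $\bar\tau^{(i)} \subsetneq A^\star$, so I pick a witness $w^{(i)} \in A^\star \setminus \bar\tau^{(i)}$ and let $K = \max_{i \in I}|w^{(i)}|$.

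The core of the proof is to construct $P^* \in \p_f$ satisfying (a) $w^{(i)} \in \bar\tau_{P^*}$ for every $i \in I$, and (b) $|P^* - Q|_n < \delta$ for some prescribed $\delta < \epsilon/2$. I would proceed in two steps. First, replace $Q$ by its stationary Markov projection $\tilde Q$ of order $N \geq n + K$, that is, the unique stationary Markov chain of order $N$ whose $N$-dimensional marginal coincides with that of $Q$; by stationarity this gives $|\tilde Q - Q|_n = 0$. Second, perturb the transition kernel of $\tilde Q$ only on strings of length $N$ whose last $|w^{(i)}|$ symbols coincide with some $w^{(i)}$: for each $i \in I$, replace $\tilde Q(\cdot \mid v w^{(i)})$ and $\tilde Q(\cdot \mid v' w^{(i)})$ on a pair of siblings by strictly positive distributions that differ from one another (and otherwise stay within $\delta$ of the originals). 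This forces those length-$N$ extensions of $w^{(i)}$ to survive irreducibility reduction as leaves of $\tau_{P^*}$, so $w^{(i)}$ itself lies in $\bar\tau_{P^*}$. Strict positivity of the perturbed kernel keeps $P^*$ stationary ergodic with finite context tree, and because only finitely many transitions are modified the $n$-marginal error can be held below $\delta$.

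On the event $\{U_n \prec \tau^\infty\}$ we then have $U_n = \tau^{(i)}$ for some $i \in I$ and $w^{(i)} \in \bar\tau_{P^*} \setminus \bar\tau^{(i)}$, whence $\tau_{P^*} \not\preceq U_n$. Hence
\[
P^*(\tau_{P^*} \preceq U_n) \;\leq\; P^*(U_n = \tau^\infty) \;\leq\; Q(U_n = \tau^\infty) + \delta \;<\; \epsilon/2 + \delta \;<\; \epsilon,
\]
so $\inf_{P \in \p_f} P(\tau_P \preceq U_n) = 0$, and a fortiori $\inf_{P \in \p} P(\tau_P \preceq U_n) = 0$. The main obstacle I anticipate is the perturbation step: one must simultaneously guarantee strict positivity of the transition kernel (for ergodicity and for the conditional probabilities of Definition~\ref{defcontext} to be defined), pairwise distinctness of the perturbed conditionals along the relevant length-$N$ extensions of each $w^{(i)}$ (so that $w^{(i)}$ is not eliminated by suffix reduction), and smallness of the induced $n$-marginal error. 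Since at most $|A|^{N - |w^{(i)}|}$ conditioning strings per $i$ need adjustment and each perturbation can be chosen of order $\delta/(|A|^N |I|)$, standard perturbation estimates for finite ergodic Markov chains should deliver all three requirements at once.
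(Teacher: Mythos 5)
Your overall architecture is sound and close in spirit to the paper's proof: both arguments exploit that $U_n$ is a function of the $n$-marginal only, and both reduce the problem to producing a measure in $\p_f$ whose $n$-marginal is close to a near-optimal $Q$ but whose context tree has a node escaping every value of $U_n$ other than $\tau^\infty$ (the paper organizes this with an increasing family of finite trees plus its Lemma~\ref{discontinuous1}; your finite-image/witness device is an equally legitimate way to set up that step, and your final probability transfer via $|P^*-Q|_n<\delta$ is correct). The gap is in the construction of $P^*$, which is precisely the step the paper isolates as Lemma~\ref{discontinuous1}. Your perturbation of the order-$N$ projection $\tilde Q$ is purely local: you only modify the kernel at conditioning strings of length $N$ ending in some $w^{(i)}$. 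If $Q(w^{(i)})=0$ --- and you cannot rule this out, since you may be forced to choose every admissible witness outside the support of $Q$ (for instance $Q$ supported on sequences avoiding a pattern while $\bar\tau^{(i)}$ contains all strings charged by $Q$) --- then those conditioning strings are never reached, the stationary law of the perturbed chain still gives $w^{(i)}$ probability zero, and $w^{(i)}$ cannot be a node of $\tau_{P^*}$: every nonempty node is a suffix of a context and hence has positive probability by Definition~\ref{defcontext}. Relatedly, your appeal to ``strict positivity of the perturbed kernel'' to obtain ergodicity (and well-defined conditionals) is unjustified: the kernel is positive only at the perturbed strings, while elsewhere it is the unmodified, possibly degenerate (and, at zero-probability strings, undefined) kernel of $\tilde Q$; moreover your ``standard perturbation estimates for finite ergodic Markov chains'' presuppose an ergodicity of $\tilde Q$ that you have not established.

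The repair is global rather than local, and it is exactly what the paper's Lemmas~\ref{discontinuous1} and~\ref{discontinuous2} do: mix the whole kernel with a strictly positive one (say weight $\eta$ on the uniform kernel), so that the order-$N$ chain is ergodic and every finite string --- in particular every $w^{(i)}$ --- has positive stationary probability, and only then superimpose your local modification making the conditional laws differ along two pasts ending in $w^{(i)}$; the resulting change of the $n$-marginal is $O(\eta)$ plus the size of the local perturbation, so closeness to $Q$ is retained. With that modification your witness argument does force $w^{(i)}\in\bar\tau_{P^*}$ (non-constancy of the conditional over pasts ending in $w^{(i)}$ rules out every suffix of $w^{(i)}$ as a context, so the contexts along such pasts properly extend $w^{(i)}$), and the conclusion follows as you state. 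As written, however, the construction of $P^*$ --- the heart of the proof --- does not go through.
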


The functional $T$ does however admit non-trivial lower confidence bounds on $\p$. In what follows, we construct a sequence of statistics which will be proved to be a non-trivial lower confidence bound and  a  consistent estimator of $T$ on $\p$, 
when $\sum_{v\in A^\star} \phi(v)<+\infty$. 

We will first define  a discrepancy measure between a sample $X_1,\dotsc, X_n$ and a 
measure $Q\in\p$. To do so, we need to introduce some more notation and definitions.
Given a string $w$, denote by $N_n(w)$ the number of occurrences of $w$ in the sample $X_1,\dotsc, X_n$; that is
\[
N_n(w) = \begin{cases} \sum_{i=0}^{n-|w|} \mathbf{1}\{X_{i+1}^{i+|w|} = w\} & n\geq |w|\\
0 & n< |w|.
\end{cases}
\]
If $N_{n-1}(w)>0$, we define for any $a\in A$ the estimated transition probability
\[
\hat p_n(a|w) := \frac{N_n(wa)}{N_{n-1}(w)} \,.
\]
Denote also by $C_{n-1}(w)$ the set of \emph{children}
of $w$ that appear in the sample at least once, that is
\[
C_{n-1}(w) = \{bw\colon b\in A\text{ and }N_{n-1}(bw)>0\}\,
\]
and by $S_n$ the set of all such strings $w$; that is 
\[
S_n = \{w\in A^\star\colon N_{n-1}(w) >0\} \,.
\]
Finally, for any context tree $\tau$, let
\[
\tau^*:=\{u\in A^\star\colon u\notin \bar\tau\}.
\]
Now, we can define our discrepancy measure as a function $d_n\colon A^n\times \p\to\R$ 
\[
d_n(X_1^n,Q) \;:= \;\max_{w\in S_n\cap\tau_Q^*}\,\{ \,N_{n-1}(w)\max_{a\in A}\,|\hat p_n(a|w) - Q(a|w)|\,\}
\] 
if $S_n\cap\tau_Q^*\neq\emptyset$. If $S_n\cap\tau_Q^*=\emptyset$ we define $d_n(X_1^n,Q)=0$. 

We are now ready to introduce the lower bound for the functional $T$.  
Given a constant $c>0$, for any $n\in\N$ let $T_n^c\colon A^n\to \T$ be defined
by  
\begin{equation}\label{eq:estimator}
T_n^c(X_1^n) \;=\; \inf\,\{\, \tau_Q\colon d_n(X_1^n,Q) \leq c\log(n) \,\}\,,
\end{equation}
where the infimum is taken with respect to the order $\prec$ between trees, and the logarithm is taken in base 2. 
Note that since the tree $\tau^\text{root}$ is the smallest element of $\T$ with respect to 
 $\prec$,  this infimum always exists. In Section~\ref{computation}
 we show how to practically compute  $T_n^c(X_1^n)$.
 
We now state the main result of this paper.

\begin{theorem}\label{lower_bound_T}
Given $0<\alpha<1$ and $n> 2$, for any $c$ satisfying
\begin{equation}\label{calpha}
(|A|-1)\Bigl(\frac{\log((|A|-1)/\alpha)}{\log(n)}+2\Bigr)  \;\leq \; c \;\leq\; \frac{n-1}{2|A| \log(n)}
\end{equation}
we have that the  statistic $T_n^c$ is a  non-trivial lower confidence 
bound for $T$ on $\p$, with nonparametric coverage probability of at least $1-\alpha$.
Moreover, if $\sum_{v\in A^\star} \phi(v)<\infty$, for any $c>2(|A|-1)$ the sequence  $\{T_n^c\}_{n\in\N}$ is a consistent estimator of $T$ on $\p$ and if $c>3(|A|-1)$ then $\{T_n^c\}_{n\in\N}$ is strongly consistent. 
\end{theorem}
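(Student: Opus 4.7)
\textbf{Plan of proof for Theorem \ref{lower_bound_T}.} The claim decomposes into coverage, non-triviality, and consistency.

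\emph{Coverage, non-triviality, and upper containment.} The first observation is that on the event $\{d_n(X_1^n,P)\le c\log n\}$ the measure $P$ itself belongs to the set whose infimum defines $T_n^c$, hence $T_n^c\preceq\tau_P$. Coverage therefore reduces to the deviation inequality
\[
P\bigl(d_n(X_1^n,P)>c\log n\bigr)\;\le\;\alpha,
\]
which is the Garivier-type concentration alluded to in the statement: for every $w\in S_n\cap\tau_P^*$, the times at which the past equals $w$ yield independent Bernoulli trials with parameter $P(a|w)=P(a|s_w)$ (where $s_w\preceq w$ is the $\tau_P$-context suffix of $w$), and a Hoeffding/Bennett bound combined with a union bound over $|S_n|\le|A|n$ and the $|A|-1$ free components of each transition row produces exactly the calibration \eqref{calpha}. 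Non-triviality is witnessed by the (stationary ergodic) degenerate measure $X_i\equiv 0$: then $\tau_P=\tau^{\text{root}}$ and $d_n(X_1^n,Q)=0$ for any $Q\in\p$ with $\tau_Q=\tau^{\text{root}}$ and $Q(0)=1$, so $T_n^c=\tau^{\text{root}}\prec\tau^\infty$ almost surely, giving $\sup_{P\in\p}P(T_n^c\prec\tau^\infty)=1$. For the upper-containment half of consistency, apply the coverage inequality with $\alpha_n:=n^{-(c/(|A|-1)-2)}$, which satisfies the lower bound of \eqref{calpha} for $n$ large: this yields $P(T_n^c\not\preceq\tau_P)\le\alpha_n$. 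When $c>2(|A|-1)$ one has $\alpha_n\to 0$ (convergence in probability); when $c>3(|A|-1)$ one has $\sum_n\alpha_n<\infty$, whence Borel--Cantelli gives $T_n^c\preceq\tau_P$ eventually almost surely.

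\emph{Lower containment.} I next claim that for every $v\in\bar\tau_P$ one has $v\in\bar{T_n^c}$ eventually almost surely. Together with upper containment and the summability of $\phi$, this closes the consistency statement via the decomposition
\[
d_\phi(T_n^c,\tau_P)\;=\;\sum_{v\in\bar\tau_P\setminus\bar{T_n^c}}\phi(v)\;+\;\sum_{v\in\bar{T_n^c}\setminus\bar\tau_P}\phi(v),
\]
in which the second sum vanishes on the upper-containment event, while the first is made small by truncating at a depth $k$ with $\sum_{|v|>k}\phi(v)<\varepsilon/2$ and applying the claim to the finitely many $v\in\bar\tau_P|_k$. To prove the claim, fix $v\in\bar\tau_P$ and assume $v\notin\bar{T_n^c}$: by definition some $Q\in\p$ with $v\notin\bar\tau_Q$ fulfils $d_n(X_1^n,Q)\le c\log n$. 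Let $s'\prec v$ be the unique $\tau_Q$-leaf that is a suffix of $v$; then $Q(\cdot|w)=Q(\cdot|s')$ for every $w$ whose $\tau_Q$-context is $s'$. Using irreducibility of $\tau_P$ together with item 3 of Definition \ref{defcontext} (distinct sibling contexts of $\tau_P$ induce distinct transition rows, for otherwise their shorter common suffix would already satisfy item 2), I can find two distinct strings $w_1,w_2\in\bar\tau_P$ both ending in $s'$, both in $S_n\cap\tau_Q^*$ for $n$ large, with $P(\cdot|w_1)\neq P(\cdot|w_2)$. By ergodicity, $N_{n-1}(w_i)/n\to P(w_i)>0$ and $\hat p_n(\cdot|w_i)\to P(\cdot|w_i)$ almost surely, and the triangle inequality gives
\[
d_n(X_1^n,Q)\;\ge\;\max_{i\in\{1,2\}}N_{n-1}(w_i)\max_{a\in A}|\hat p_n(a|w_i)-Q(a|s')|\;\ge\;\tfrac12\min_i N_{n-1}(w_i)\,\max_{a\in A}|\hat p_n(a|w_1)-\hat p_n(a|w_2)|,
\]
a quantity growing at rate $\Theta(n)$ that eventually exceeds $c\log n$, contradicting the supposition.

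\emph{Expected main difficulty.} The subtle step is the lower containment: the infimum defining $T_n^c$ ranges over the entire infinite-dimensional space $\p$, so one must argue \emph{uniformly} in $Q$ that no admissible choice of the single free transition vector $Q(\cdot|s')$ can simultaneously accommodate two distinct limiting rows $P(\cdot|w_1)$ and $P(\cdot|w_2)$. The rigidity supplied by irreducibility of $\tau_P$ is precisely what forces $d_n$ to grow linearly (rather than logarithmically) in $n$ and is the heart of the argument.
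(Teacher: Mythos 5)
Your lower-containment/consistency plan is essentially the paper's argument (the paper fixes, for each node $v$ with largest proper suffix $v'$, a single pair $v_1wv'$, $bwv'$ depending only on $v$, while you condition on the $\tau_Q$-leaf $s'$ below $v$, which also works since $v$ has finitely many suffixes), but the other two parts contain genuine gaps. The most serious is non-triviality. By the paper's definition, a lower confidence bound $L_n$ is non-trivial when $-L_n$ is a non-trivial upper bound for $-T$, i.e.\ when $\sup_{P\in\p}P(\tau^{\text{root}}\prec T_n^c)=1$: the statistic must be able to exceed the \emph{trivial lower bound} $\tau^{\text{root}}$ with probability arbitrarily close to one. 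You instead verify $\sup_{P}P(T_n^c\prec\tau^\infty)=1$, which is the non-triviality condition for an \emph{upper} bound, and your witness (the degenerate process $X_i\equiv 0$) produces $T_n^c=\tau^{\text{root}}$ almost surely, i.e.\ exactly the trivial value, so it proves nothing of what is required. This is precisely where the upper constraint $c\le (n-1)/(2|A|\log n)$ in \eqref{calpha} enters, and your argument never uses it: the paper takes a cyclic Markov chain $Q^\epsilon$ on $A$ with $\epsilon$ so small that with probability at least $1-\delta$ the sample is a deterministic cycle, and then the upper bound on $c$ makes the intervals $[l_n(\cdot,a),u_n(\cdot,a)]$ of Proposition~\ref{contprop} narrow enough (width below $1/2$ around empirical frequencies $0$ and $1$) to force a ramification at the root, giving $P(\tau^{\text{root}}\prec T_n^c)\ge 1-\delta$.

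The coverage step is also only asserted, and the ingredients you name would not deliver the stated calibration. Occurrences of $w$ in the sample can overlap, so the successive symbols following $w$ are not independent Bernoulli trials (only a martingale structure survives); more importantly, Hoeffding/Azuma at the \emph{absolute} deviation scale $c\log n$ gives a bound of order $\exp(-2(c\log n)^2/N_{n-1}(w))$, which is useless when $N_{n-1}(w)$ is of order $n$, so it cannot be uniform over the random counts; and $|S_n|$ can be of order $n^2$ (all substrings of the sample), not $|A|n$ --- the union bound over $n^2$ strings is exactly what produces the ``$+2$'' in \eqref{calpha}. The paper's Lemma~\ref{prop:borne} avoids all of this with the Garivier--Leonardi device: the exponential martingale $W_n(w,a)=2^{N_{n-1}(w)\log_2(1+p(a|w))-N_n(wa)}$ has mean one for $w\in\tau_P^*$, and Markov's inequality yields a tail bound $n^{-c/(|A|-1)}$ that does not depend on $N_{n-1}(w)$; without this (or an equivalent self-normalized argument) your claim that the sketch ``produces exactly the calibration \eqref{calpha}'' is unsubstantiated. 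Finally, two smaller points in the lower-containment step: the parenthetical ``distinct sibling contexts of $\tau_P$ induce distinct transition rows'' is false for $|A|\ge 3$ (two siblings may share a row as long as not all do), and requiring $w_1,w_2\in\bar\tau_P$ is neither needed nor always achievable; what irreducibility (failure of item 2 of Definition~\ref{defcontext} at the internal node $s'$) actually gives, and what your growth argument needs, is two positive-probability \emph{proper} extensions of $s'$ with distinct $P$-rows, so that both lie in $S_n\cap\tau_Q^*$ eventually and the single row $Q(\cdot|s')$ cannot track both empirical rows.
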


\section{Computation and application of the lower confidence bound}\label{computation}

In this section we show how to compute the confidence bound \eqref{eq:estimator}
and we present a practical application of Theorem~\ref{lower_bound_T} to linguistic data.

\subsection{Tree lower bound algorithm}

Let $X_1,\dotsc,X_n$  be a given sample and let $c>0$  be a
fixed constant. To compute the tree $T_n^c(X_1^n)$, we will identify its nodes, i.e. the set $\bar{T}_n^c(X_1^n)$.  By definition, we know that  $w\in\bar{T}_n^c(X_1^n)$ if and only if every process $Q$ satisfying $d_n(X_1^n,Q)\leq c\log(n)$ has context tree with $w$ as a node. The following proposition gives a simple criteria to check whether or not we have to include a string $w$ in the set $\bar{T}_n^c(X_1^n)$. It relies on  two quantities, $l_n(w,a)$ and $u_n(w,a)$, which are defined for any $w\in S_n$ and any $a\in A$ by
 \begin{align}\label{eq:l}
 l_n(w,a) &= \max_{sw\in S_n} \;\Bigl\{ \hat p(a|sw) - \frac{c\log(n)}{N_{n-1}(sw)}\Bigr\}\\\label{eq:u}
 u_n(w,a) &= \min_{sw\in S_n} \;\Bigl\{ \hat p(a|sw) + \frac{c\log(n)}{N_{n-1}(sw)}\Bigr\}\,.
  \end{align} 
  \begin{proposition}\label{contprop}
Let $w$ be a finite string with $N_{n-1}(w)>0$. Then there exists a process $Q$
satisfying  $d_n(X_1^n,Q)\leq c\log(n)$ and having $w$ as a context if and only if
the following conditions hold
\begin{enumerate}
\item For any $a\in A$,  $l_n(w,a) \leq u_n(w,a)$.
\item $\sum_{a\in A} l_n(w,a) \,\leq\,1\,\leq\, \sum_{a\in A} u_n(w,a)$\,.
\end{enumerate}
\end{proposition}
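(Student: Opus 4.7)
My plan is to prove the equivalence by addressing the two directions separately, interpreting $l_n(w,a)$ and $u_n(w,a)$ as maxima and minima over non-empty $s$ (the case $s=\emptyset$ is irrelevant once $w\in\bar\tau_Q$, which holds whenever $w$ is a context of $Q$).

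\emph{Necessity.} Suppose $Q\in\p$ has $w$ as a context and $d_n(X_1^n,Q)\le c\log(n)$. For each non-empty $s\in A^\star$ with $sw\in S_n$ I claim $sw\in\tau_Q^*$: if some leaf of $\tau_Q$ had $sw$ as a suffix, it would also have $w$ as a proper suffix (since $|sw|>|w|$), contradicting the tree property because $w$ itself is a leaf. Hence $sw\notin\bar\tau_Q$, and the discrepancy bound gives $|\hat p_n(a|sw)-Q(a|sw)|\le c\log(n)/N_{n-1}(sw)$. Because $w$ is the context of every past whose last $|w|$ symbols equal $w$, we have $Q(a|sw)=Q(a|w)$. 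Intersecting the resulting intervals over all admissible $s$ yields $l_n(w,a)\le Q(a|w)\le u_n(w,a)$, which is condition~(1); summing over $a\in A$ and using $\sum_a Q(a|w)=1$ gives condition~(2).

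\emph{Sufficiency.} Assume conditions~(1) and~(2). Standard interval feasibility (for non-empty intervals whose lower endpoints sum to at most $1$ and upper endpoints to at least $1$) supplies $q_a\in[l_n(w,a),u_n(w,a)]$ with $\sum_a q_a=1$; a slight perturbation inside the intervals further guarantees $q_a>0$ for every $a$. I would then construct $Q$ explicitly as a variable length Markov chain with context tree $\tau_Q:=\{w\}\cup\{v\in A^M:\,w\text{ is not a suffix of }v\}$ for some integer $M>n$, with $Q(a|w):=q_a$ and $Q(a|v):=1/|A|$ on every other leaf. Strict positivity of all transitions makes the underlying finite-memory Markov chain on states $A^M$ irreducible and aperiodic, so $Q\in\p$; irreducibility of $\tau_Q$ follows from $|A|\ge2$ together with $M>|w|$, since every proper suffix of $w$ is realized as a proper suffix of some length-$M$ leaf avoiding $w$.

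The key structural fact to verify is the identification $\tau_Q^*\cap A^\star=\{sw:s\in A^\star,\,s\ne\emptyset\}$. The inclusion ``$\supseteq$'' is immediate because the only leaf containing $w$ as a suffix is $w$ itself, whereas $|sw|>|w|$; the reverse inclusion is a routine extension argument showing that any finite string $u$ not of this form is a suffix of some length-$M$ leaf avoiding $w$, where the case $|u|<|w|$ invokes $|A|\ge2$ to pick a symbol breaking the $w$-suffix. Once this is in hand, each $u\in S_n\cap\tau_Q^*$ satisfies $u=sw$, $Q(a|u)=q_a$, and $|\hat p_n(a|u)-q_a|\le c\log(n)/N_{n-1}(u)$ by construction of $q_a$, giving $d_n(X_1^n,Q)\le c\log(n)$. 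The main obstacle is this combinatorial bookkeeping in the sufficiency direction: the tree $\tau_Q$ must simultaneously be irreducible, yield an ergodic VLMC, and have $\tau_Q^*$ coincide exactly with $\{sw\}$ so that no observed string escapes the discrepancy budget; each requirement is elementary in isolation, but checking them jointly via a careful case analysis on lengths is where the bulk of the work lies.
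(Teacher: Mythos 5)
Your necessity argument is sound and is essentially the paper's: for $s\neq\lambda$ the string $sw$ cannot be a node of $\tau_Q$ when $w$ is a leaf (tree property), so the discrepancy budget applies to every $sw\in S_n$, and $Q(a|sw)=Q(a|w)$ gives $l_n(w,a)\le Q(a|w)\le u_n(w,a)$; summing over $a$ gives condition 2.

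The sufficiency direction, however, has a genuine gap, located exactly where you defer to ``combinatorial bookkeeping''. The tree $\tau_Q$ entering $d_n$ is not a modelling choice: by Definition 2.1 it is the context tree of the law $Q$, determined intrinsically by its conditional probabilities. If you assign the same distribution $1/|A|$ to every leaf $v\in A^M$ with $w$ not a suffix of $v$, the kernel depends on the past only through whether it ends in $w$; the true context tree of your $Q$ is then not $\{w\}\cup\{v\in A^M\colon w \text{ not a suffix of } v\}$ but the minimal completion of $\{w\}$, all of whose nodes have length at most $|w|$ (and if the chosen $q$ happens to equal the uniform distribution, $w$ is not even a context, the true tree being $\tau^{\text{root}}$). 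Consequently $S_n\cap\tau_Q^*$ contains every observed string $u$ of length greater than $|w|$ that does not end in $w$, where $Q(a|u)=1/|A|$, so $d_n$ picks up the uncontrolled terms $N_{n-1}(u)\max_a|\hat p_n(a|u)-1/|A||$, which can be of order $n$: e.g.\ for the sample $0^{n-2}10$ over $A=\{0,1\}$ with $w=1$, conditions 1 and 2 hold, yet $u=00$ gives $\hat p_n(0|u)\approx 1$ against $Q(0|u)=1/2$. The paper's construction avoids this precisely by making the other, very long contexts genuine: the length-$M$ leaves must carry conditional distributions that actually depend on the depth-$M$ past (say pairwise distinct, strictly positive, and different from $q$), so that they really are contexts of the law. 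Then every sample string not having $w$ as a proper suffix is an internal node of the true $\tau_Q$, hence exempt from the discrepancy, and $S_n\cap\tau_Q^*=\{sw\in S_n\colon s\neq\lambda\}$ as you need; with that repair (which also disposes of the degenerate uniform-$q$ case) your argument coincides with the paper's.
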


We now give a simple algorithm (see Fig \ref{alg}) to construct the estimated tree. Let us explain how it works. Since every context tree has the root  $\lambda$ as node, then  $\lambda\in \bar{T}_n^c(X_1^n)$, and  we can inicialize the algorithm with $\lambda$. We then proceed iteratively  as follows, until we exhaust the set $S_n$. 

Suppose that a string $w$ has been included in $\bar{T}_n^c(X_1^n)$. If $C_{n-1}(w)\cap S_n\neq\emptyset$ and at least one  of the conditions of Proposition \ref{contprop} is not satisfied for $w$, this means that there do not exist processes $Q$ satisfying $d_n(X_1^n,Q)\leq c\log(n)$ and having $w$ as a context. In other words, all processes such that $d_n(X_1^n,Q)\leq c\log(n)$ has $w$
as a proper suffix of their contexts.  
Thus the set $C_{n-1}(w)\cap S_n$ must belong to $\bar{T}_n^c(X_1^n)$. On the other hand, if both conditions of Proposition \ref{contprop} are  satisfied for $w$, then there exists at least one process $Q$ such that $d_n(X_1^n,Q)\leq c\log(n)$ and having $w$ as a context. In this case we let $w$ be a context of $\bar{T}_n^c(X_1^n)$ and we stop checking its descendants (strings of the form $sw\in S_n$, with $s\in A^\star$). 

\begin{figure}[!htb]
\begin{center}
\framebox{
\begin{minipage}{10cm}
\vspace{2mm}
{\bf Tree lower bound (TLB) algorithm }
\begin{itemize}
\item[(1)] Initialise with $\bar T_n^c(X_1^n)\leftarrow\{\lambda\}$
 and $S\leftarrow \{\lambda\}$.
\item[(2)] While $S\neq \emptyset$, pick any $w\in S$  and do:
\begin{itemize}
\item[(a)] Remove $w$ from $S$;
\item[(b)] For any $a\in A$ compute the values $l_n(w,a)$ and $u_n(w,a)$ (see \eqref{eq:l} and \eqref{eq:u}).
\item[(c)] If for some $a\in A$, $u_n(w,a)<l_n(w,a)$ or if
\[
1\notin \Bigl[\sum_{a\in A} l_n(w,a)\,;\, \sum_{a\in A} u_n(w,a)  \Bigr]
\]
then add $C_n(w)$ to $\bar T_n^c(X_1^n)$ and to $S$.
\end{itemize}
\end{itemize}
\end{minipage}
}
\end{center}
\caption{Algorithmic steps to compute the lower bound in \eqref{eq:estimator}.}
\label{alg}
\end{figure}

\subsection{One-sided test of hypotheses for context trees}

In this subsection we present an application of the lower confidence bound introduced  in 
\eqref{eq:estimator}  to test a hypothesis about the context tree 
of codified  texts written in European Portuguese.  This  dataset, that is publicly available,  was first  analyzed in \citet{galves-et-al2012} where a method  to estimate a context tree was proposed and then applied to solve a linguistic conjecture about the rhythmic distinction between European and Brazilian Portuguese. The written texts were codified into the alphabet $A=\{0,1,2,3,4\}$ taking into account the stressed syllables and the boundaries of words; see \citet{galves-et-al2012} for details.  The European Portuguese context tree obtained in the cited work is the one shown in  Fig.~\ref{eptree}. Another analysis of the same dataset with similar results can be found in \cite{belloni-imbuzeiro2015}.

\begin{figure}[t!]
\subfigure[European Portuguese context tree.]{
\includegraphics[scale=0.8]{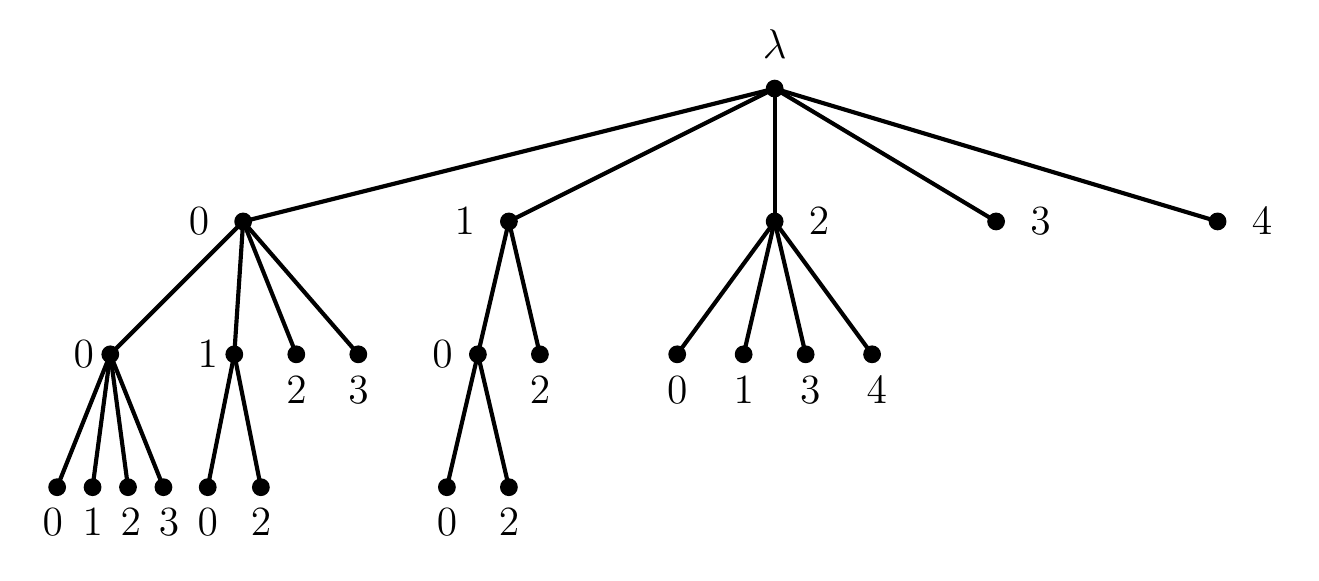}\label{eptree}}
\subfigure[Tree $\tau_0$ for the test of hypotheses.]{
\includegraphics[scale=0.8]{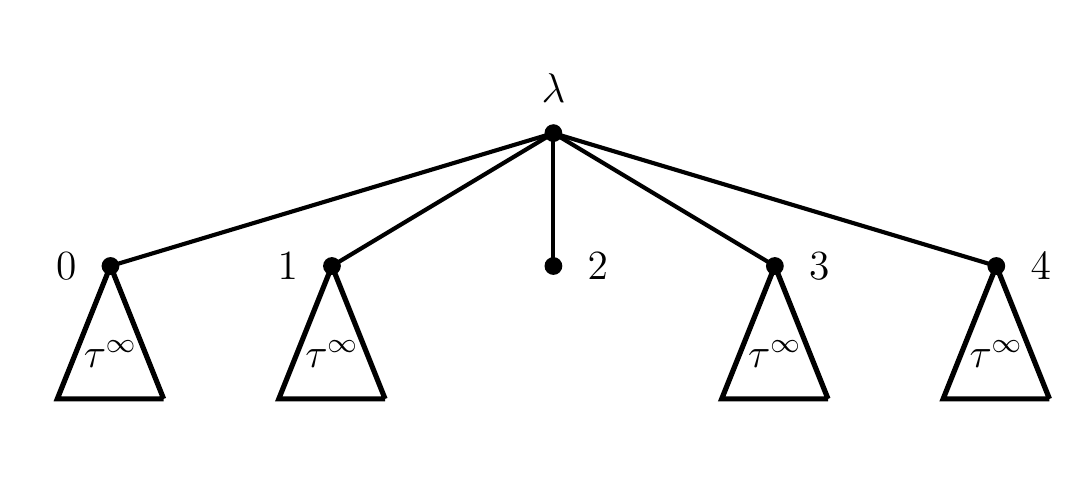}\label{eptreeinfty}}
\subfigure[Estimated lower bound.]{
\includegraphics[scale=0.8]{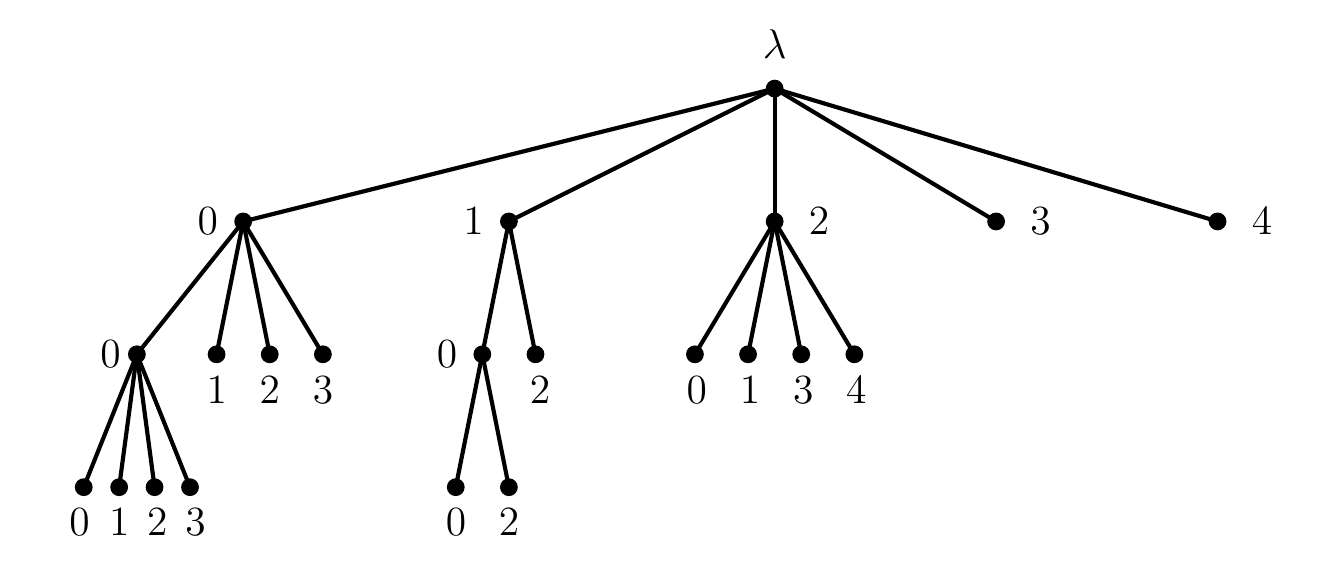}\label{tauhat}}
\caption{On top we show the European Portuguese context tree over the alphabet $A=\{0,1,2,3,4\}$ estimated from a corpus of codified written texts. 
On the middle we show a representation of the tree $\tau_0$ used for the definition of the test of hypotheses. The triangles with $\tau^\infty$ written inside represent infinite complete trees that ramify from  symbols $0,1,3$ and $4$. In other words, $\tau_0$ has a unique finite context, which is $2$. The bottom tree corresponds to the lower bound $T_n^c(X_1^n)$ computed on the same sample as the tree on top, with $\alpha=0.05$.}
\end{figure}

An interesting difference with a corresponding  linguistic interpretation between the two languages observed from the codified data was the ramification of string  ``2"  into the set of contexts ``02", ``12",  ``32''and ``42" that appears in the European Portuguese context tree in Fig.~\ref{eptree} (in the Brazilian Portuguese context tree this ramification did not occur and the string ``2" was identified as a context). A natural idea is then to test if there is enough evidence in the data supporting that the European Portuguese context tree ramifies from the sequence ``2" or not. 
  
It is well known that tests of hypotheses  can be constructed using confidence bounds. 
Let $\tau_0$ be a tree  and suppose we want to test the hypotheses 
\[
H_0: \tau_P \preceq \tau_0 \quad  vs.  \quad H_1: \tau_P\npreceq\tau_0\,.
\]
Given $n$ and $\alpha$, consider the test that rejects $H_0$ if and only if  $T_n^c(X_1^n)\in R=\{\tau\in \T: \tau\npreceq\tau_0\}$, with 
$c= (|A|-1)(\log((|A|-1)/\alpha)/\log(n)+2)$.
By Theorem \ref{lower_bound_T} we have that 
\[
\sup_{P:\tau_P\preceq \tau_0}P(T_n^c(X_{1}^{n})\npreceq \tau_0)\;\leq\; \sup_{P\in\p}P(T_n^c(X_{1}^{n})\npreceq \tau_P)\;\leq\; \alpha\,.
\]
Thus, the test defined by the rejection region $R$ has significance level $\alpha$
 for the hypotheses  $H_0: \tau_P\preceq \tau_0$ vs. $H_1: \tau_P\npreceq \tau_0$.

In our application the null hypothesis is defined by a tree $\tau_0$ having the string $``2"$ as a context. Since we impose no further condition, we let $2$ be the unique finite context of $\tau_0$,  that is
\[
\tau_0 =\{2\}\cup \{wa\colon w\in \tau^\infty, a\in A, a\neq 2\}\,.
\]
This tree is represented in Fig.~\ref{eptreeinfty}. We set the significance level $\alpha=0.05$,  and as our sample size is $n=107.761$ we have $c=9,513$.
The estimated tree with the TLB algorithm of Fig~\ref{alg} 
is given in Fig.~\ref{tauhat}. We see that $T^c_n(X_1^n)$ belongs to the rejection region $R$ therefore we reject the null hypothesis  at the significance level $\alpha=0.05$, confirming in this way the results of \cite{galves-et-al2012} about the ramification of sequence ``2" in the European Portuguese context tree.

The algorithm described in Fig.~\ref{alg} was coded in the {\ttfamily R} language and is available upon request.

\section{Proofs}\label{sec:proofs}

\begin{proof}[Proof of Lemma \ref{lema_baire}]
With respect to the weak topology, the set of all stationary probability measures over $(\Omega,\F)$ is a compact Hausdorff space \citep{shields1996}
and the subspace $\p$ of all stationary and  ergodic probability measures over $(\Omega,\F)$ is a $G_\delta$ set 
 \citep[Theorem 2.1]{category1961}. Therefore, $\p$ is a Baire space with the induced topology. 
 \end{proof}

\begin{proof}[Proof of Proposition \ref{non_consistency}]
The proof uses the same arguments of Lemma~1.1 in \citet{fraiman1999}. 
The difference is that  here we do not have independent random variables 
and  the space $\p$ is not a complete metric space with respect to $D$. But the same result
can be obtained in our setting, as we show in the sequel.
Recall that in the conditions of the proposition, there exists $R\in\R$ such that 
$|F(P)| \leq R$ for all $P\in\p$, and assume that  $\{F_n\}_{n\in\N}$ is a consistent estimator for $F$. Define 
\[
S_n = F_n I_{\{|F_n| \leq R\}} + sg(F_n) R I_{\{|F_n|>R\}},
\]
where $I$ is the indicator function and $sg$ is the sign of $F_n$.
It is not hard to show that
$\{S_n\}_{n\in\N}$ is also a consistent estimator for $F$, for details see  \citep[Lemma~1.1]{fraiman1999}.  
As for any $n\in\N$ the function $S_n$ is  bounded by $R$ we have that the convergence in probability to $F(P)$
implies convergence in mean. Therefore  we  have that 
\[
\phi_n(P)  := \E_P(S_n)  \;\to\; F(P)
\]
as $n\to\infty$.  Moreover,
\begin{align*}
|\phi_n(P) - \phi_n(Q)| &\;=\; \bigl|\,\sum_{x_1^n\in A^n} S_n(x_1^n)\,P(x_1^n) - \sum_{x_1^n\in A^n} S_n(x_1^n)\,Q(x_1^n)\,\bigr|\\
&\;\leq\; \sum_{x_1^n\in A^n}\, \bigl|S_n(x_1^n)\bigr|\, \bigl|\, P(x_1^n)-Q(x_1^n)\bigr|\\
&\leq\; R \,2^n D(P,Q)\,.
\end{align*}
Therefore, for each $n$, $\phi_n$ is uniformly continuous with respect to the weak topology (induced by $D$) on $\p$.  
Then, by Lemma~\ref{lema_baire} and the Baire's Cathegory Theorem, the function $F$ must be continuous on a dense subset of $\p$.
\end{proof}
 
To continue we need two basic lemmas that constitute  the core of all our negative results. 

\begin{lemma}\label{discontinuous1}
Any measure $P\in\p$ can be approximated with respect to $D$ by a sequence 
of measures $\{P_n\}_{n\in\N}$ in $\p$ each of which have as context tree a given tree $\tau$, with $\tau_P\preceq\tau$. In particular, $\tau$ can be  infinite.
\end{lemma}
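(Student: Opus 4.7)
The plan is to construct $P_n$ as small perturbations of $P$'s transition kernel, engineered so that the resulting process has exactly $\tau$ as its context tree while remaining close to $P$ in $D$. Fix a positive sequence $\epsilon_n\downarrow 0$. For each leaf $v\in\tau$ (finite or infinite), define
$$P_n(a\mid v)\;=\;P(X_0=a\mid X_{-|v|}^{-1}=v)\;+\;\epsilon_n\,\eta_{v,a},$$
where the first term is interpreted, for an infinite leaf $v$, as the $P$-almost-sure martingale limit along the growing past, and the perturbations $\eta_{v,a}$ are chosen so that $\sum_a\eta_{v,a}=0$, each $P_n(\cdot\mid v)$ is a probability on $A$, and the row vectors $(\eta_{v,a})_{a\in A}$ are pairwise distinct among leaves of $\tau$ sharing a common suffix in $\tau_P$; such a choice is possible by an explicit enumeration of those leaves. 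A preliminary small convex mixture of $P$ with the i.i.d.\ uniform measure on $A$ ensures strict positivity of all needed transitions and handles the case where $P(v)=0$ for some finite $v$ appearing in $\tau$, without affecting the weak limit.

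Next, I would argue that the specification so defined admits a stationary ergodic measure, which we take to be $P_n\in\p$. When $\tau$ is finite this is immediate: $P_n$ is a VLMC with strictly positive transitions, hence uniquely ergodic. When $\tau$ is infinite, the transition kernel is continuous on $A^\infty$ in the product topology, because two pasts sharing a sufficiently long right-tail fall into the same leaf of $\tau$ and therefore into the same leaf of $\tau_P$; existence of a stationary measure then follows from Krylov--Bogolyubov on the compact space of stationary probability measures. Ergodicity for small $\epsilon_n$ is obtained by coupling $P_n$ with $P$: step by step the two kernels agree except on a set of mass $O(\epsilon_n)$, so any invariant set for $P_n$ is, in the limit, invariant for $P$; passing if necessary to an ergodic component of $P_n$ that is close to $P$ in $D$ produces the desired element of $\p$.

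The identity $\tau_{P_n}=\tau$ follows from the construction. On the one hand, $P_n(\cdot\mid v)$ depends on the past only through its suffix in $\tau$, so $\tau_{P_n}\preceq\tau$. On the other hand, the pairwise distinctness of the $\eta_{v,\cdot}$ across leaves of $\tau$ with a common proper suffix in $\tau_P$ prevents any two such leaves from collapsing into a single context of $P_n$, so no strict ancestor tree $\tau'\prec\tau$ can be the context tree, forcing $\tau\preceq\tau_{P_n}$. Finally, $D(P_n,P)\to 0$ reduces, by dominated convergence in the series defining $D$, to showing $|P_n-P|_k\to 0$ for each fixed $k$; this holds because the transition kernels differ uniformly by $O(\epsilon_n)$ and the $k$-dimensional marginals depend continuously on the kernel under small perturbations (a straightforward induction on $k$ using stationarity).

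The main obstacle will be the second step when $\tau$ is infinite, since then $P_n$ is genuinely an infinite-memory process and one cannot rely on finite VLMC machinery: showing that the perturbed specification supports a stationary ergodic measure close to $P$ requires the continuity and coupling ingredients sketched above, which leverage the ergodicity of $P$ itself rather than any mixing hypothesis on $P_n$. A secondary technicality is handling leaves of $\tau$ that sit on $P$-null pasts, resolved cleanly by the preliminary uniform mixture and having no further effect on the convergence $D(P_n,P)\to 0$ or on the identification $\tau_{P_n}=\tau$.
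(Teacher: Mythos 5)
There is a genuine gap, and it sits exactly where you flagged the ``main obstacle'': the existence and ergodicity of a stationary measure compatible with your perturbed kernel when $\tau$ is infinite. Your continuity claim does not hold: for an arbitrary stationary ergodic $P$ the conditional probabilities $a_{-\infty}^{-1}\mapsto P(a\mid a_{-\infty}^{-1})$ are only defined almost surely as martingale limits and need not admit any continuous version, and mixing in an $\epsilon_n$-perturbation (or a small uniform component) does not repair this, since the wild base term $P(a\mid v)$ is still there. Moreover, for an infinite leaf $v$ of $\tau$, pasts agreeing with $v$ on an arbitrarily long right-tail do \emph{not} eventually fall into the same leaf of $\tau$, so even the structural part of your continuity argument fails. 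Without continuity (or some quantitative regularity), Krylov--Bogolyubov-type arguments do not give a stationary chain compatible with the specification, so the object $P_n$ you want to call an element of $\p$ is not known to exist. The ergodicity step is also not sound as sketched: ``any invariant set for $P_n$ is, in the limit, invariant for $P$'' is not a coupling argument one can run for a measure whose existence is itself in question, and passing to an ergodic component can destroy both the closeness to $P$ in $D$ and the identification $\tau_{P_n}=\tau$ (a component may kill the probability of strings whose positivity you need for the leaves of $\tau$ to be genuine contexts).

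The paper circumvents precisely these issues by never perturbing $P$'s kernel directly. It first replaces $P$ by its canonical $k$-step Markov approximation $P^{[k]}$ (a finite-range, hence trivially regular, kernel with $\tau_{P^{[k]}}\preceq\tau_P$), and then mixes $P^{[k]}$ with a single fixed auxiliary process $\tilde P$ chosen once and for all to have transition probabilities uniformly bounded away from zero, strictly positive continuity rates at every depth along every past, and summable continuity rate; the mixture is read off along the leaves of $\tau$. Nonnullness plus summable continuity put the mixed kernel in the class of chains of complete connections for which existence, uniqueness and ergodicity of a compatible stationary measure are known (Bressaud--Fern\'andez--Galves, Fern\'andez et al.), so ergodicity comes for free rather than from a coupling with $P$; and the everywhere-positive continuity rates of $\tilde P$ guarantee that every leaf of $\tau$, including the infinite ones, is a genuine context, which is the role your ad hoc pairwise-distinct $\eta_{v,a}$ were meant to play but cannot play uniformly over infinitely many leaves while keeping the kernel regular. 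A diagonal argument over $k$ and the mixing weight then gives convergence in $D$. If you want to salvage your approach, you would essentially have to reintroduce these two ingredients (a preliminary finite-range approximation of $P$, and a perturbation with controlled, summable variation at all depths), at which point it becomes the paper's proof.
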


\begin{proof}
We proceed in two steps, first we define a sequence of Markov measures $\{P^{[k]}\}_{k\in\N}$ converging  to $P$ and then for any $k\in\N$, we construct a sequence  of stationary ergodic  measures $\{P^{[k]}_i\}_{i\in\N}$ each of which have context tree $\tau$ and that converges to $P^{[k]}$. The conclusion of the proof then follows by a diagonal argument, since  convergence in $D$ (or in the weak topology) corresponds to convergence of the measure of cylinders \citep[Section I.9]{shields1996}. 

For any $k\in\N$, let   $P^{[k]}$ be the $k$-steps canonical Markov approximation of $P$, which is a Markov chain of order $k$ with transition probabilities  
\begin{equation}\label{pk}
P^{[k]}(a|a_{-k}^{-1}):=P(a|a_{-k}^{-1})\,,\,\,\,\,a \in A\,,\; a_{-k}^{-1}\in A^k\,.
\end{equation}
An important observation is that $\tau_{P^{[k]}}\preceq\tau_P$, since for any 
semi-infinite sequence $a_{-\infty}^{-1}\in A^\infty$ the length of the context of $P^{[k]}$ along $a_{-\infty}^{-1}$ is at most the length of the context of $P$. 
Moreover, it is well known that the sequence $\{P^{[k]}\}_{k\in\N}$ converges weakly to $P$ (see  \cite{rudolph1977} for instance), then the first step is proven. 

To continue, let us introduce the continuity rate of a process $\tilde P$ along a given past 
$a_{-\infty}^{-1}$, which is the non-increasing sequence
 $\{\beta^{\tilde P}_l(a_{-\infty}^{-1})\}_{l\in\N}$ defined as
\[
\beta^{\tilde P}_l(a_{-\infty}^{-1}):=\sup_{a,b_{-\infty}^{-1},c_{-\infty}^{-1}}|\tilde{P}(a|b_{-\infty}^{-1}a_{-l}^{-1})-\tilde{P}(a|c_{-\infty}^{-1}a_{-l}^{-1})|\,,\quad l\ge1.
\]
Observe that $\beta^{\tilde P}_{l-1}(a_{-\infty}^{-1})>0$ means $a_{-l}^{-1}\in\bar\tau_{\tilde P}$ and therefore  $\beta_l^{\tilde P}(a_{-l}^{-1})>0$ for all $l$ means that the infinite sequence $a_{-\infty}^{-1}\in\tau_{\tilde P}$. 
Let $\tilde{P}$ be a measure in $\p$  satisfying the following three conditions:
\begin{itemize}
\item[(i)] $\inf_{a_{-\infty}^{0}}\{\tilde P(a_0|a_{-\infty}^{-1})\} >0$. 
\item[(ii)] For any $a_{-\infty}^{-1}\in A^\infty$ and any $l\in\N$, $\beta_l^{\tilde{P}}(a_{-\infty}^{-1})>0$.  
\item[(iii)] $\sum_{l\in\N}\sup_{a_{-\infty}^{-1}}\beta^{\tilde{P}}_l(a_{-\infty}^{-1})<\infty$. 
\end{itemize}

It should be clear to the reader that such a measure $\tilde P\in\p$ can always be selected. An example of this is the observable chain in a Hidden Markov Model, that under simple assumptions satisfy conditions (i)-(iii) above, see for instance \cite{collet-leonardi-2014}.

Now consider any context tree $\tau$ such that  $\tau_P\preceq\tau$. For all $i\in\N$ define the kernel
\[
P^{[k]}_i(a|a_{-\infty}^{-1})=(1-1/i)\, P^{[k]}(a|a_{-k}^{-1})+ 1/i\, \tilde{P}(a|a_{-l}^{-1})\,,\; a\in A\,,\; a_{-l}^{-1}\in \tau\,.
\]
We have  $\inf_{a_{-\infty}^{0}}\{P^{[k]}_i(a_0|a_{-\infty}^{-1})\}\ge 1/i\, \inf_{a_{-\infty}^{0}}
\{\tilde{P}(a_0|a_{-\infty}^{-1})\}>0$ for any $i\in\N$. 
Thus, this kernel satisfies (i) and let us show that it also satisfies property (iii). For any  $a_{-\infty}^{-1}$ with $a_{-l}^{-1}\in \tau$ 
we have
\[
|P_i^{[k]}(a|b_{-\infty}^{-1}a_{-r}^{-1})-P_i^{[k]}(a|c_{-\infty}^{-1}a_{-r}^{-1})| = 1/i\, |\tilde{P}(a|b_{-\infty}^{-l-1}a_{-r}^{-1})-\tilde{P}(a|c_{-\infty}^{-l-1}a_{-r}^{-1})|
\]
for all $r<l$ or $|P_i^{[k]}(a|b_{-\infty}^{-1}a_{-r}^{-1})-P_i^{[k]}(a|c_{-\infty}^{-1}a_{-r}^{-1})| =0$ if $r\geq l$, for all $a\in A$ and  all $b_{-\infty}^{-1},c_{-\infty}^{-1}\in A^\infty$.
Conditions (i) and (iii)  ensure that there exists  a unique stationary ergodic measure having kernel $P_i^{[k]}$; see for instance \citet{bressaud/fernandez/galves/1999a,fernandez2002}. 

By the above observations, the contexts of $P_i^{[k]}$ are exactly the sequences in $\tau$, since $\tau_{P^{[k]}}\preceq\tau_P\preceq\tau$.
 Now, since $\{P_i^{[k]}\}_{i\in\N}$ converges uniformly to $P^{[k]}$ as $i\to\infty$ we also have  $\{P_k^{[k]}\}_{k\in\N}$ converging in $D$ to $P$ as $k$ diverges. 
 \end{proof}

\begin{lemma}\label{discontinuous2}
Any measure $P\in\p$ can be approximated with respect to $D$ by a sequence 
of measures $\{P_n\}_{n\in\N}$ in $\p$ each of which have a  finite context tree.
\end{lemma}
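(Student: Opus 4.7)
The plan is to leverage the construction already carried out in the proof of Lemma~\ref{discontinuous1}. Specifically, I would reuse the sequence $\{P^{[k]}\}_{k\in\N}$ of canonical $k$-step Markov approximations defined by \eqref{pk}, and observe that it already has the two properties required: approximation of $P$ in $D$, and finiteness of the context tree of each term.

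First I would note that since $P^{[k]}$ is a stationary Markov chain of order $k$, every context must have length at most $k$, so $\bar\tau_{P^{[k]}}\subseteq \bigcup_{j=0}^{k}A^j$, which is a finite set; in particular $\tau_{P^{[k]}}$ is a finite tree. Second, I would recall that the proof of Lemma~\ref{discontinuous1} already established (citing \cite{rudolph1977}) that $P^{[k]}\to P$ weakly, hence in $D$, as $k\to\infty$. Putting these two facts together gives a sequence in $\p$ of measures with finite context trees converging to $P$.

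The delicate point, and the only real obstacle, is to ensure that each $P^{[k]}$ is ergodic (so that it belongs to $\p$), since the canonical Markov approximation of an ergodic process need not be ergodic in full generality. To handle this uniformly, I would introduce a small perturbation: fix an ergodic, fully supported Bernoulli measure $\mu$ on $A^\Z$, and define
\[
\widetilde{P}^{[k]} \;=\; (1-1/k)\,P^{[k]} \;+\; (1/k)\,\mu\,,
\]
interpreted at the level of transition kernels (as in the construction in Lemma~\ref{discontinuous1}) so that $\widetilde P^{[k]}$ is the unique stationary measure of a Markov chain of order $k$ whose transitions are the corresponding convex combination. The mixing with $\mu$ makes the transition kernel strictly positive on all of $A^{k+1}$, so the chain is irreducible and aperiodic, hence $\widetilde{P}^{[k]}\in\p$. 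Its context tree is still contained in $\bigcup_{j=0}^{k}A^j$ and is therefore finite. Finally, since $\|\widetilde{P}^{[k]}-P^{[k]}\|_k\to 0$ as $k\to\infty$ and $P^{[k]}\to P$ in $D$, the triangle inequality gives $\widetilde{P}^{[k]}\to P$ in $D$, completing the proof.
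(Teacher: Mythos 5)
Your proposal is correct and follows essentially the same route as the paper: the paper likewise takes the canonical $k$-step Markov approximations $P^{[k]}$ of \eqref{pk} and repairs the possible lack of ergodicity by mixing the transition kernel with a strictly positive i.i.d.\ component, defining $P^{[k]}_i(a|a_{-\infty}^{-1})=(1-1/i)P^{[k]}(a|a_{-k}^{-1})+\frac{1}{i|A|}$ and concluding by a diagonal argument with $P^{[k]}_k$. Your only departures are cosmetic: you mix with a general fully supported Bernoulli kernel instead of the uniform one, and you tie the perturbation parameter directly to $k$ rather than introducing a second index and diagonalizing.
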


\begin{proof}
We prove this lemma using a similar two-steps argument as in the previous one. First, we use the same sequence of canonical Markov approximations $\{P^{[k]}\}_{k\in\N}$ defined in 
\eqref{pk} to approximate $P$. Second, as we do not know whether these Markov measures are ergodic or not, we construct, for any $k\ge1$, a sequence $\{P_i^{[k]}\}_{i\in\N}$ of ergodic Markov measures converging to $P^{[k]}$ when $i\to\infty$. The conclusion of the proof also follows from a diagonal argument. 

The construction of $P_i^{[k]}$ is also carried as in the previous lemma, by specifying the kernel of transition probabilities
\[
P^{[k]}_i(a|a_{-\infty}^{-1})= (1-1/i) \, P^{[k]}(a|a_{-k}^{-1})+ \frac{1}{i|A|}\,,\; a\in A\,,\; a_{-\infty}^{-1}\in A^\infty\,.
\]
Is is easy to see that this definition leads to a Markovian  (i.e.  with finite context tree) ergodic measure, and that  the sequence $\{P_i^{[k]}\}_{i\in\N}$  converges to $P^{[k]}$ when $i\to\infty$. As before we
have that $\{P_k^{[k]}\}_{k\in\N}$ converges  to $P$  when $k\to\infty$ and this concludes the proof.
\end{proof}

We are now ready to prove Theorem \ref{discontinuity}.

\begin{proof}[Proof of Theorem \ref{discontinuity}]
Assume that $\sum_{v\in \bar\tau^\infty} \phi(v) =+\infty$. Then Lemmas \ref{discontinuous1} 
and \ref{discontinuous2}  imply that any $P\in\p$ having $L(P)=0$ (respectively $L(P)=1$) is limit in $D$ of a sequence of measures $\{P_n\}_{n\in\N}$  in $\p$ satisfying $L(P_n)=1$ 
 (respectively $L(P_n)=0$) for all $n\in\N$.  In other words, the functional $L$ is discontinuous (with respect to the $D$-distance) at any point of $\p$. Together with  Proposition~\ref{non_consistency}, this   proves that $L$ is not consistently estimable on $\p$ when $\sum_{v\in \bar\tau^\infty} \phi(v) =+\infty$.
\end{proof}

\begin{proof}[Proof of Theorem \ref{teo-main}]
As we already mentioned, the proof of the \emph{if} part of the theorem follows from Theorem \ref{lower_bound_T} which states that $\{T_n^c\}_{n\in\N}$ is actually a consistent estimator of $\tau_P$. It remains  to prove the \emph{only if} part.
Assume $\sum_{v\in \bar\tau^\infty} \phi(v) = +\infty$   and suppose there exists $\{T_n\}_{n\in\N}$,  a consistent estimator of $T$ on $\p$. Define $L_n\colon A^n\to \{0,1\}$ by $L_n(x_1^n) = \mathbf{1}\{\sum_{v\in \bar{T}_n(x_1^n)}\phi(v) < +\infty\}$. We will prove that  $\{L_n\}_{n\in\N}$ is a consistent  estimator of $L$, which is a contradiction with Theorem \ref{discontinuity}, concluding the proof of the theorem.

Fix $P\in\p$.
As $\{T_n\}_{n\in\N}$ is consistent we have that for any $\epsilon>0$
\[
\lim_{n\to\infty} P(d_\phi(T_n(X_1^n),\tau_P) \leq  \epsilon) \;=\; 1\,.
\]
We will prove that for any $\epsilon>0$ the ball of center $\tau_P$ and radius $\epsilon$ contains only trees where $L$ is constant and equal to $L(\tau_P)$.  
By the definition of $d_\phi$, see \eqref{hamming}, for $\tau'\in \T$, 
\[
d_\phi(\tau_P,\tau') \;=\; \sum_{v\in\bar\tau'} \phi(v) \,+\,\sum_{v\in\bar\tau_P} \phi(v)-2\sum_{v\in\bar\tau'\cap\bar\tau_P}\phi(v).
\]
Then if $d_\phi(\tau_P,\tau') < \epsilon$ we have  $L(\tau_P) =1$ if and only if $L(\tau')=1$. 
Therefore
\[
\lim_{n\to\infty} P(\,L_n(X_1^n) = L(P)\,) \;\geq\; \lim_{n\to\infty} P(\,d_\phi(T_n(X_1^n),\tau_P) \leq  \epsilon\,) \;= \;1
\]
which proves that $L$ is consistently estimable on $\p$. But by Theorem~\ref{discontinuity}, $L$ is not consistently estimable on $\p$, which is a contradiction.
\end{proof}

\begin{proof}[Proof of Theorem \ref{teo_upper_bound}]
Suppose $U_n$ satisfies  
\[
\sup_{P\in\p}P(U_n  \prec \tau^\infty)=\; 1\,.
\]
Given $\delta>0$ choose a measure $P_{\delta}\in\p_f$  such that 
\begin{equation}\label{eq1}
P_{\delta}(  U_n \prec \tau^\infty)\;\geq \; 1-\frac{\delta}{3}\,.  
\end{equation}
This can always be done because the set $\p_f$ is dense in $\p$ (see Lemma~\ref{discontinuous2}). 
Let $\{\tau_k\}_{k\in\N}$ be an increasing sequence of finite trees and denote by $V_k$  the event
\[
V_k = \{\tau_k \prec U_n\}\,.
\]
We have $V_k\supset V_{k+1}$ for all $k\ge1$ and $\cap_{k\ge1} V_k = \{U_n =\tau^\infty\}$. 
Therefore 
\[
\lim_{k\to\infty} P_\delta(V_k) \;=\; P_\delta(U_n=\tau^\infty) \;\leq\; \frac13\delta\,.
\]
Now let $k^*$  be such that
\[
P_\delta(V_{k^*}) \;<\; \frac23\delta\,,
\]
and denote by $\tau$ the finite tree given by  $\bar\tau=\bar\tau_{k^*}\cup\bar\tau_{P_\delta}$. 
We have
\[
 P_{\delta}( \tau\npreceq  U_n )\;\geq \; P_{\delta}( \tau_{k^*}\npreceq  U_n )\;\geq \; 1-\frac{2}{3}\delta\,.
\]
By Lemma~\ref{discontinuous1}, there exists a measure $Q_\delta\in\p_f$ with 
$\tau_{Q_\delta}=\tau$ such that 
\[
D(P_{\delta},Q_\delta) \;<\; 2^{-n}\frac{\delta}{3}\,.
\]
Moreover we have 
\begin{align*}
P_{\delta} ( \tau\npreceq &\;U_n) \;-\;Q_\delta( \tau \npreceq U_n )\\
& =\; \sum_{x_1^n\in A^n} 
\mathbf{1}_{\{ \tau\npreceq U_n(x_1^n)\}}\,P_{\delta}(x_1^n) - \sum_{x_1^n\in A^n} \mathbf{1}_{\{\tau\npreceq U_n(x_1^n)\}}\,Q_\delta(x_1^n) \\
&\leq  \;\sum_{x_1^n\in A^n} \mathbf{1}_{\{\tau \npreceq U_n(x_1^n)\}}\,|\,P_{\delta}(x_1^n)
- Q_\delta(x_1^n)\,|\\
&\leq\;  2^nD(P_{\delta},Q_\delta) \;<\;\frac{\delta}{3}\,.
\end{align*}
Therefore 
\begin{equation*}
Q_\delta ( \tau \npreceq  U_n)\;> \; P_{\delta}( \tau\npreceq U_n ) \,-\,\frac{\delta}{3} \; \geq\; 1-\delta\,.
\end{equation*}
As $\delta$ is arbitrary we have just proved that 
\[
\inf_{Q\in\p} Q(\tau_Q \preceq U_n) \;=\; \inf_{Q\in\p_f} Q(\tau_Q \preceq U_n) \;=\; 0\,.\qedhere
\]
\end{proof} 

In order to prove Theorem~\ref{lower_bound_T} we will need  the following lemma. 

\begin{lemma}\label{prop:borne}
Given  $P\in\p$,  let $X_1,\dotsc,X_n$ be a sample of size $n$ with law $P$. Then 
for any constant $c>0$ we have 
\[
P\bigl( \,d_n(X_1^n,P) \leq\, c\log(n)\,\bigr) \;\geq\;  1- \frac{|A|-1}{n^{c/(|A|-1)-2}}\,.
\]
\end{lemma}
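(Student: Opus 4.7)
The plan is to rewrite $\{d_n(X_1^n,P)\le c\log n\}$ as a simultaneous bound on the martingale deviations
\[
\Delta_n(w,a) \;:=\; N_n(wa) - N_{n-1}(w)\,P(a|w),
\]
noting that $N_{n-1}(w)|\hat p_n(a|w)-P(a|w)|=|\Delta_n(w,a)|$, and then to control these via a martingale concentration inequality together with a union bound over pairs $(w,a)$.

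The first reduction uses the identity $\sum_{a\in A}N_n(wa) = N_{n-1}(w)$ --- every occurrence of $w$ in $X_1^{n-1}$ is followed by some letter of $X_1^n$ --- which yields $\sum_{a\in A}\Delta_n(w,a)=0$. Fixing any reference letter $a^*\in A$, this implies $|\Delta_n(w,a^*)| = |\sum_{a\neq a^*}\Delta_n(w,a)| \le (|A|-1)\max_{a\neq a^*}|\Delta_n(w,a)|$, hence $\max_{a\in A}|\Delta_n(w,a)| \le (|A|-1)\max_{a\neq a^*}|\Delta_n(w,a)|$. Setting $t = c\log n/(|A|-1)$ then converts the event $\{d_n(X_1^n,P)>c\log n\}$ into the union, over $w\in S_n\cap\tau_P^*$ and $a\in A\setminus\{a^*\}$, of the events $\{|\Delta_n(w,a)|>t\}$.

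For $w\in\tau_P^*$ the definition of context tree forces a proper suffix $s$ of $w$ to be a context of $P$; in particular the conditional probability of $X_i=a$ given $X_1^{i-1}$ equals $P(a|s)=P(a|w)$ whenever $X_{i-|w|}^{i-1}=w$. Therefore
\[
\Delta_n(w,a) \;=\; \sum_{i=|w|+1}^{n}\mathbf{1}\{X_{i-|w|}^{i-1}=w\}\bigl(\mathbf{1}\{X_i=a\}-P(a|w)\bigr)
\]
is a martingale with respect to the natural filtration $\mathcal{F}_i=\sigma(X_1,\dots,X_i)$, with increments bounded by $1$ in absolute value. For each such pair I would apply a Chernoff/Bernstein-type exponential inequality for bounded martingales to obtain the per-pair tail
\[
P\bigl(|\Delta_n(w,a)|>t\bigr) \;\le\; 2\,n^{-c/(|A|-1)}.
\]

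The final step is a union bound: the number of distinct substrings of a word of length $n$ is at most $n(n+1)/2\le n^2/2$ (a classical combinatorial fact on factors of a word), and there are $|A|-1$ admissible symbols $a$, so the total number of pairs is at most $(|A|-1)n^2/2$, giving
\[
P\bigl(d_n(X_1^n,P) > c\log n\bigr) \;\le\; \tfrac{(|A|-1)n^2}{2}\cdot 2\,n^{-c/(|A|-1)} \;=\; \frac{|A|-1}{n^{c/(|A|-1)-2}}.
\]
The main obstacle is establishing the per-pair tail with precisely the exponent $c/(|A|-1)$: the crude sub-Gaussian Azuma--Hoeffding bound $2\exp(-t^2/(2n))$ does not directly yield this polynomial rate, so a finer Bernstein/Bennett-type martingale inequality exploiting the conditional variance $P(a|w)(1-P(a|w))N_{n-1}(w)$ (or the self-normalised deviation inequality used by \citet{garivier2011}) is required to balance the exponent and produce the claimed polynomial decay.
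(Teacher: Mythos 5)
Your outer scaffolding matches the paper's: the restriction to $w\in\tau_P^*$ so that, after an occurrence of $w$, the conditional law of the next symbol given the finite past is exactly $P(\cdot|w)$ (because a proper suffix of $w$ is a context), the reduction from $|A|$ letters to $|A|-1$ with threshold $t=c\log n/(|A|-1)$, and the union bound over the at most $n^2$ strings of $S_n$ (your count $n^2/2$ times a per-pair constant $2$ gives the same arithmetic). But the proof is not complete, and the gap is exactly the step you defer: the per-pair tail $P(|\Delta_n(w,a)|>t)\le 2\,n^{-c/(|A|-1)}$ is asserted, not proved, and the tools you name cannot deliver it. Any Azuma or Bernstein/Bennett bound is governed by the accumulated conditional variance $N_{n-1}(w)\,P(a|w)(1-P(a|w))$, which for a string with $P(w)>0$ is typically of order $n$; with $t\asymp\log n$ the resulting exponent is of order $(\log n)^2/n$ and the bound is vacuous. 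Indeed the natural scale of $\Delta_n(w,a)$ is $\sqrt{N_{n-1}(w)}$, so no variance-based deviation inequality can certify a two-sided additive deviation of only logarithmic size in this regime; the ``main obstacle'' you flag at the end is the entire content of the lemma, not a finishing touch.

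The paper's argument is structurally different and never bounds a two-sided additive deviation. First, $\max_{a}|\hat p_n(a|w)-P(a|w)|$ is dominated by the sum of the positive parts $\sum_{a:\,P(a|w)>\hat p_n(a|w)}\bigl(P(a|w)-\hat p_n(a|w)\bigr)$, so only the one-sided events $\{N_{n-1}(w)P(a|w)-N_n(wa)>t\}$ for at most $|A|-1$ letters need to be handled. Second, for each such event it invokes the exponential martingale $W_n(w,a)=2^{\,N_{n-1}(w)\log_2(1+P(a|w))-N_n(wa)}$ of \citet{garivier2011}, combines the elementary inequality $p\le\log_2(1+p)$ with Markov's inequality, and obtains $P\bigl(E_n(w,a)\bigr)\le n^{-c/(|A|-1)}$ directly: the self-normalisation by the random count $N_{n-1}(w)$ is built into the martingale itself, no conditional variance ever appears, and the one-sidedness of the event is essential to the trick. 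So the ``self-normalised inequality used by Garivier'' that you mention only as a fallback is in fact the key device of the proof; your symmetric two-sided formulation demands strictly more than the paper needs, and the Bernstein/Bennett route you propose in its place would fail.
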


\begin{proof}
First note that if  $S_n\cap\tau_P^*=\emptyset$ then  the assertion of the lemma is trivial 
because in this case $d_n(X_1^n,P)=0$ and then $P(d_n(X_1^n,P) \leq\, c\log n)  =1$. Now suppose $S_n\cap\tau_P^*\neq\emptyset$
and let $w\in S_n\cap\tau_P^*$. We recall  the reader that all the logarithms are taken in base 2. This is not in fact a real restriction, as if the base is $r> 1$ we can replace $c$ by $c/\log_2(r)$ and the result holds as well.
First note that we can write 
\[
 \max_{a\in A} |\hat p_n(a|w) - p(a|w)| \;=\; \sum_{\substack{a\in A\\
 p(a|w)>\hat p_n(a|w)}}p(a|w) - \hat p_n(a|w)\,.
\]
For a proof of this equivalence see for instance \cite[Proposition~4.2 and Remark~4.3]{levin-et-al-book}. Therefore we have that the event
\[
B_n(w) = \{N_{n-1}(w)\, \max_{a\in A} |\hat p_n(a|w) - p(a|w)|>c\log(n)\}
\]
is included in the event
\begin{equation}\label{inclu1}
E_n(w) =  \bigcup_{a\in A} E_n(w,a)
\end{equation}
where
\[
E_n(w,a) = \Bigl\{(N_{n-1}(w)p(a|w) - N_n(wa))\mathbf{1}\{p(a|w)>\hat p_n(a|w) \} >  \frac{c\log(n)}{|A|-1}\Bigr\}\,.
\]
and the union \eqref{inclu1} has at most $|A|-1$ non empty sets. 
Now define the random variables 
\begin{align*}
W_n(w,a) &\;=\;  2^{N_{n-1}(w)\log(1+p(a|w))-N_n(wa)}\,,\quad n\geq |w|\,.
\end{align*}
Then, by the inequality $p(a|w) \leq \log_2(1+p(a|w))$ valid in the interval $(0,1]$
we have that for $a\in A$ such that $p(a|w)>\hat p_n(a|w)$,  the event $E_n(w,a)$
is included in the event  
\begin{equation}\label{inclu2}
F_n(w,a) = \Bigl\{W_n(w,a) > n^{c/(|A|-1)}\Bigr\}\,,
\end{equation}
As in \cite[Proposition~A.1]{garivier2011}, we will show that when $w\in \tau^*_P$,
the sequence $\{W_n(w,a)\}_{n\in \N}$ is a martingale with respect to  
the  filtration $\{\sigma(X_1,\dotsc,X_{n-1})\}_{n\in\N}$.
In fact, note that by the definition of $N_{n}(\cdot)$ we have that 
\begin{align*}
\E(2^{(N_{n+1}(wa) \,- N_n(wa))}|X_1,\dotsc,X_n)
&\;=\; \E(2^{\mathbf{1}\{X_{n+1-|w|}^{n+1}= wa\}}|X_1,\dotsc,X_n)\\
&\;=\; 2^{ (\mathbf{1}\{X_{n+1-|w|}^{n}= w\})\log(1+p(a|w))}\\
&\;=\; 2^{ (N_{n}(w) \,- N_{n-1}(w))\log(1+p(a|w))}
\end{align*}
which implies that $\E(W_{n+1}(w,a)|X_1,\dotsc,X_n)=W_n(w,a)$.
Thus $\E(W_{n}(w,a))=\E(W_{|w|}(w,a))=1$
and therefore, by Markov's inequality and a union bound 
 we have that
\begin{equation*}
P(E_n(w)) \;\leq\; (|A|-1) \sup_{a\in A} P( F_n(w,a) )\;\leq\;  \frac{|A|-1}{n^{c/(|A|-1)}}\,.
\end{equation*}
One more union bound over  $S_n\cap\tau_P^*$ yields    
\begin{align*}
P\bigl( \,d_n(X_1^n,P) > \,c\log n\bigr)\;& =\;  P\bigl(\;\cup_{w\in S_n} \, B_n(w)\;\bigr)\\
&\leq \; n^2\,\sup_{w\in S_n}\, \{\,P\bigl(\, E_n(w)\,\bigr)\,\}\\
&\leq \; \frac{|A|-1}{n^{c/(|A|-1)-2}}\,.
\end{align*}
\end{proof}

\begin{proof}[Proof of Theorem \ref{lower_bound_T}]
Observe that  for any $P\in\p$  the event
 $\{ d_n(X_1^n,P) \leq c\log(n)\}$ implies $\{ T_n^c(X_1^n) \preceq \tau_P\}$. Therefore, by Lemma~\ref{prop:borne} and the condition on $c$  we have
\begin{align*}
P(\,T_n^c(X_1^n) \preceq \tau_P\,)& \;\geq\; P (d_n(X_1^n,P) \leq c\log(n))\\
& \;\geq\; 1-
(|A|-1)/n^{c/(|A|-1)-2}\\
&\;\geq \; 1-\alpha\,.
\end{align*}
To prove that $T_n^c(X_1^n)$ is not trivial we have to prove that 
\begin{equation}\label{eq:nontrivial}
\sup_{P\in\p} P( \tau^\text{root} \prec T_n^c(X_1^n))=1\,.
\end{equation}
For that consider the transition matrix $Q^{\epsilon}$ on $A=\{1,\ldots,k\}$  defined by
\begin{align*}
Q^{\epsilon}(i,i)& \;=\;1-Q^{\epsilon}(i,i+1)\;=\;\epsilon\,,\quad \text{ for $i=1,\ldots,k-1$, \, and }\\ Q^{\epsilon}(k,k)&\;=\;1-Q^{\epsilon}(k,1)\;=\;\epsilon\,.
\end{align*}
The parameter $\epsilon$ will be chosen adequately later. Observe that if $\epsilon>0$ there exists a unique stationary  ergodic Markov chain $P$ specified by $Q^{\epsilon}$. It is also easy to see, by symmetry, that $P(i)=1/k$ for any $i=1,\ldots, k$. Now for any $n\geq 3$ denote by $\mathcal{M}_n$ the set of strings $x_1^n\in A^n$ such that 
 $x_{i+1}= x_{i}+1$ for $i=1,\ldots,n-1$. 
  Observe that there are exactly $k$ such strings, independently of $n$, each beginning in  a different symbol of $A$. Moreover, each of these strings have equal measure
\[
P(x_{1}^{n})=\frac{1}{k}(1-\epsilon)^{n-1}.
\]
On the other hand, thanks to Proposition~\ref{contprop} and the  TLB algorithm in Fig.~\ref{alg}, the $k$ trees $\{T_n^c(x_1^n):x_{1}^{n}\in \mathcal{M}_n\}$ will be different from 
$\tau^\text{root}$, because if $c\leq (n-1)/ 2|A|\log(n)$, as $N_{n-1}(x_1) \geq N_{n-1}(x_2)\geq \lceil \frac{n-1}{|A|}\rceil$ we will have 
\[
u_n(x_2,x_2) \;\leq\; \hat p_n(x_2|x_2) + \frac{c\log(n)}{N_{n-1}(x_2)} <\; 1/2.
\]
and
\[
l_n(x_1,x_2) \;\geq\; \hat p_n(x_2|x_1) - \frac{c\log(n)}{N_{n-1}(x_1)} \;>\; 1/2\,.
\]
In other words,
\[
P( \tau^\text{root} \prec T_n^c(X_1^n)) \; \geq \; P( X_1^n\in\mathcal{M}_n)=k\times\frac{1}{k}(1-\epsilon)^{n-1}\,.
\]
To conclude, observe that for any $\delta>0$, we can take $\epsilon=1-(1-\delta)^{1/(n-1)}$, and we get
\[
P( \tau^\text{root} \prec T_n^c(X_1^n))\;\ge\;1-\delta
\]
proving that \eqref{eq:nontrivial} holds.

Now we will prove the consistency of the estimator 
$\{T_n^c(X_1^n)\}_{n\in\N}$ for any $c>2(|A|-1)$, by showing that for any $P\in\p$ and any $\epsilon>0$ the event $d_\phi(T_n^c(X_1^n),\tau_P)\leq\epsilon$ occurs with probability converging to 1 as $n\to\infty$. 
To begin, notice that by Lemma~\ref{prop:borne} we have that 
\[
P(d_n(X_1^n,P)\leq c\log n ) \;\geq\; 1- \frac{|A|-1}{n^\delta}\quad\text{  for all }n\geq 1\,, 
\]
where $\delta=c/(|A|-1)-2 > 0$. 
This implies, by the definition of $T_n^c(X_1^n)$,  that 
\begin{equation}\label{boundsum}
P(T_n^c(X_1^n) \preceq  \tau_P ) \;\geq\; 1- \frac{|A|-1}{n^\delta}\quad\text{ for all }n\geq 1\,.
\end{equation}
Now, recall that in the conditions of the theorem $\sum_{v\in \bar\tau^\infty} \phi(v)<\infty$, and take $k\in\N$ such that 
\begin{equation}\label{eps}
\sum_{u\in \bar\tau_P\colon |u|>k} \phi(u)  \;<\; \epsilon \,.
\end{equation}
Thus we have with probability at least $1-(|A|-1)/n^\delta$ that 
\begin{align}\label{dist}
 d_\phi(T_n^c(X_1^n),\tau_P) \;&\leq\;  \sum_{u\in \bar\tau_P|_k\setminus \bar T_n^c(X_1^n)} \phi(u)  + \sum_{u\in \bar\tau_P\colon |u|>k} \phi(u)\notag\\
 &\leq\;  \sum_{u\in \bar\tau_P|_k\setminus \bar T_n^c(X_1^n)} \phi(u) \; +\; \epsilon\,.
 \end{align}
Therefore it is enough to prove that  $\bar\tau_P|_k\setminus \bar T_n^c(X_1^n)=\emptyset$  with probability converging to 1 as $n\to\infty$, or what is stronger, with probability equal to 1 for $n$ sufficiently large, a fact that we refer as to occur \emph{eventually almost surely} or \emph{e.a.s.} for short. 
To prove this last assertion, for any  $v\in \bar\tau_P|_k$ we will show that  the set $\{Q\colon d_n(X_1^n,Q) \leq c\log n\}$ is included in the set $\{Q\colon v\in\bar\tau_Q\}$ e.a.s. As the set  $\bar\tau_P|_k$ is finite we will have  $\{Q\colon d_n(X_1^n,Q) \leq c\log n\}\subset\{Q\colon\tau_Q\succeq \tau_P|_k\}$  e.a.s. and therefore $T_n^c(X_1^n)\succeq \tau_P|_k$  e.a.s., which in turns implies that
$\bar\tau_P|_k\setminus \bar T_n^c(X_1^n)=\emptyset$ e.a.s. 
 So, let $v\in \bar\tau_P|_k$,  $v=v_1^j$, and denote by $v'$ its largest 
proper suffix, that is $v'= v_2^j$.  It can be shown that we can always find a symbol 
$b\in A$ and another  finite string $w\in A^\star$
 such that  the following conditions hold
 \begin{itemize}
 \item[(i)] $P(a|v_1wv') \neq P(a|bwv')$ for some $a\in A$ and 
 \item[(ii)] $v$ is a suffix (proper or not) of $v_1wv'$.
 \end{itemize}
 If $v$ is a context for $P$, i.e if $v\in\tau_P\cap\bar\tau_P|_k$ then it is enough to take $w=
 \lambda$ and $b\neq v_1$ satisfying (i) (such $b$ must always exist because $v$ is a 
 context).  In this case we have $v=v_1wv'$ and (ii) is also satisfied. On the other hand, if $v$ 
 is not a context then it is an internal node of $\tau_P$ and we must have some $w=w'v_1$, with $w'\in A^\star$, and $b\in A$ such that (i)-(ii) are satisfied. If not, this would imply that for any $u\in A^\star$  
 $P(\cdot |uv)=P(\cdot|v)$, contradicting the fact that $v$ is a proper suffix of a context.
Using the triangle inequality we have, for any $Q\in\p$, that 
\begin{align}\label{eq:triangle}
|Q(a|v_1wv')-&Q(a|bwv')|\;\ge\; | P(a|v_1wv')- P(a|bwv')|\notag\\
&- |P(a|v_1wv')-\hat p_n(a|v_1wv')|-|\hat p_n(a|v_1wv')-Q(a|v_1wv')|\notag \\
&-|Q(a|bwv')-\hat p_n(a|bwv')|-|\hat p_n(a|bwv')-P(a|bwv')|\,.
\end{align}
Now, by ergodicity  we have that $P$-almost surely, for any finite sequence $s\in A^\star$ 
\begin{equation}\label{eq:ergo}
\Bigl|\frac{N_n(s)}{n} - P(s)\Bigr|\to 0
\end{equation}
when $n\to \infty$,  which in turns implies that for any $s\in A^\star$
\begin{equation}\label{eq:ergo2}
|\hat p_n(a|s) - P(a|s)|\to0\,.
\end{equation}
In particular by \eqref{eq:ergo}, for any finite sequence $s\in A^\star$ we will have $N_n(s) \geq nP(s)/2$ e.a.s. This fact, together with \eqref{eq:ergo2} and the inequality \eqref{eq:triangle} implies that for a sufficiently large $n$,  any measure  $Q$ such that $d_n(X_1^n,Q) \leq c\log n$ 
 will satisfy 
\begin{equation}\label{eq:cont}
|Q(a|v_1wv')-Q(a|bwv')|\; >\; 0
\end{equation}
and thus $v\in\bar\tau_Q$. 
Therefore $\{Q\colon d_n(X_1^n,Q) \leq c\log n\}\subset\{Q\colon v\in\bar\tau_Q\}$ and 
$\bar\tau_P|_k\setminus \bar T_n^c(X_1^n)=\emptyset$ e.a.s, as required, showing that
\[
P(d_\phi(T_n^c(X_1^n),\tau_P) \leq \epsilon)\;\to \; 1\quad\text{ when }n\to\infty\,.
\]
To finish the proof we only emphasize that if $c>3(|A|-1)$ then $\delta>1$ and  therefore the bound in \eqref{boundsum} is summable. This fact together with the Borel-Cantelli lemma implies that 
$\T_n^c(X_1^n) \preceq \tau$ e.a.s. Therefore, as the other inclusion 
$\tau|_k\preceq \T_n^c(X_1^n)$ also holds e.a.s, by $\eqref{dist}$ we will
have that for all $n$ sufficiently large 
\[
d_\phi(T_n^c(X_1^n),\tau_P) \;\leq\; \epsilon
\]
with probability one.
\end{proof}

\begin{proof}[Proof of Proposition~\ref{contprop}]
First suppose there exists a process $Q$ satisfying  \linebreak$d_n(X_1^n,Q)\leq c\log(n)$ and having $w$ as a context. Then for any $s\in A^*$ such that $sw\in S_n\cap\tau_Q^*$ and any $a\in A$ we have that 
\[
|Q(a|sw) - \hat p(a|sw)|  \, \leq\, \frac{c\log(n)}{N_{n-1}(sw)}\,.
\]
Therefore for any $a\in A$ and any $s\in A^*$ such that $sw\in S_n\cap\tau_Q^*$  we have
\begin{equation}\label{qwa}
 \hat p(a|sw) - \frac{c\log(n)}{N_{n-1}(sw)}  \, \leq\, Q(a|w)  \, \leq\, \hat p(a|sw)+  \frac{c\log(n)}{N_{n-1}(sw)}
\end{equation}
because $w$ is a context for $Q$ which implies 
\[
l_n(w,a) \,\leq\, Q(a|w)  \, \leq\, u_n(w,a)
\]
by maximizing (minimizing) with respect to $s$ the left (right) side of \eqref{qwa}, and the first condition 
is proven. Now, by summing this last inequality over $a\in A$ we obtain that 
$\sum_{a\in A}l_n(w,a)$ and $\sum_{a\in A}u_n(w,a)$ must satisfy the second condition as well.  

Now suppose 1. and 2. hold. Then it is easy to see that these conditions allow us  to choose some positive values $Q(a|w)$ in the interval $(l_n(w,a);u_n(w,a))$ for any $a\in A$ in such a way that $\sum_{a\in A} Q(a|w)=1$. Then, it is straightforward to construct a stationary ergodic Variable Length Markov chain having $\{Q(a|w)\}_{a\in A}$ as the conditional distribution of the next symbol in the sequence given the past string $w$, and such that $w$ is a context for $Q$. 
The other contexts  $v\in \tau_Q\setminus\{w\}$ can be 
chosen  arbitrarily large in such a way that $N_{n-1}(v)=0$, implying that $S_n\cap\tau^*_Q=\{sw\colon N_{n-1}(sw)>0\}$. The conditions on  $\{Q(a|w)\}_{a\in A}$ implies that for all $sw\in S_n$ and all $a\in A$ we have
\[
\hat p_n(a|sw) - \frac{c\log(n)}{N_{n-1}(sw)} \;\leq\; Q(a|sw) \;\leq\; \hat p_n(a|sw) + \frac{c\log(n)}{N_{n-1}(sw)} 
\]
and therefore 
\[
d_n(X_1^n,Q)\; =\; \max_{v\in S_n\cap\tau^*_Q } \,\{N_{n-1}(v) \max_{a\in A}|\hat p_n(a|v) - Q(a|v)|_1\}\; \leq\; c\log(n)\,. \qedhere
\]
\end{proof}

\section*{Discussion}

The main contribution of this work is  the introduction of a lower confidence bound for the context tree in the class of stationary ergodic probability measures over $A^\Z$, with $A$
a finite alphabet. We derive an explicit formula for the coverage probability of this confidence bound, based on a  martingale deviation inequality developed in  \citet{garivier2011}, and we show the almost sure convergence of this estimator  with respect to the Hamming distance $d_\phi$, when $\phi$ is summable.  To our knowledge, this is the first lower confidence bound for context trees and it is also the first strong consistent estimator that do not restrict the length of the estimated contexts, as for example  does the BIC context tree estimator in \citet{csiszar2006} that only allows candidate contexts of length $o(\log n)$. Using only topological arguments  we  also prove that if $\phi$ is not summable then  there exists no  consistent estimator of the context tree in the class of stationary and ergodic processes. This is not the case in the class of processes having 
 finite context trees because  in this case  the BIC estimator is strongly consistent. 
On the other hand we also prove  that it is not possible to obtain nonparametric upper  confidence bounds even in the smaller class of processes having finite context trees, because any process can be approximated in $D$  by stationary ergodic processes having arbitrary large context trees, as shown in Lemma~\ref{discontinuous1}. 
We also show in this work a practical application of the lower confidence bound to  test a hypothesis involving the presence of finite contexts in codified written texts of European Portuguese. We support at the confidence level of 95\% the results obtained in \cite{galves-et-al2012} for this dataset, where only point estimation of the context tree was addressed. 

\section*{Acknowledgements}
We are thankful to Antonio Galves, Ricardo Fraiman and Miguel Abadi
for interesting discussion on the subject and to the Associate Editor and an anonymous referee for suggestions to improve the exposition of the results. 

\bibliographystyle{imsart-nameyear}
\bibliography{references}  

\end{document}